\DeclareMathOperator*{\argmin}{arg~min}
\newcommand{\bff}{{\bf f}}
\newcommand{\bfx}{{\bf x}}
\newcommand{\bfv}{{\bf v}}
\newcommand{\sign}{\hbox{sign}}
\newcommand{\bfc}{{\bf c}}
\newcommand{\bv}[1]{\boldsymbol{#1}}
\newcommand{\R}{\mathbb{R}}
\newcommand{\cN}{{\cal N}}
\newcommand{\T}{{\cal T}}
\newcommand{\W}[2]{\mathrm{W}^{#1,#2}(\Omega)}
\def\jump#1{{[\![#1]\!]}}
\newcommand{\E}{{\cal E}}
\def\3bar{{|\hspace{-.02in}|\hspace{-.02in}|}}
\numberwithin{equation}{section}
\newtheorem{example}{\bf Example}[section]
\newtheorem{algorithm}{Algorithm}[section]
\title{A Bivariate Spline Method for Second Order Elliptic Equations
in Non-Divergence Form}
\author{Ming-Jun Lai\thanks{Department of Mathematics,
University of Georgia, Athens, GA 30602. mjlai@uga.edu. This
research is partially supported by Simons collaboration grant 280646
and the National Science Foundation under the grant \#DMS 1521537. }
\and Chunmei Wang\thanks{Department of Mathematics, Texas State
University, San Marcos, Texas 78666
(c\_w280@txstate.edu).  The research of Chunmei Wang was partially supported by National Science Foundation 
Award DMS-1522586, National Natural Science Foundation of China Award \#11526113, 
Jiangsu Key Lab for NSLSCS Grant \#201602, and by Jiangsu Provincial Foundation Award \#BK20050538.} }
\begin{document}

\maketitle

\begin{abstract}
A bivariate spline method  
is developed to numerically solve second order
elliptic partial differential equations (PDE) in non-divergence form. 
The existence, uniqueness,  stability  as well as approximation 
properties of the discretized solution will be established by using the 
well-known Ladyzhenskaya-Babuska-Brezzi (LBB) condition. 
Bivariate splines, discontinuous splines with smoothness constraints 
are used to implement the method. A plenty of computational results 
based on splines of various degrees are presented to demonstrate the 
effectiveness and efficiency of our method.
\end{abstract}

\begin{keywords} primal-dual,  discontinuous Galerkin, finite element 
methods, spline approximation, Cord\`es condition
\end{keywords}

\begin{AMS}
65N30, 65N12, 35J15, 35D35
\end{AMS}

\pagestyle{myheadings}

\section{Introduction} 
We are interested in developing an efficient numerical method for
solving  second order elliptic equations in non-divergence form. To
this end,   consider the   model problem:  Find $u=u(x)$ satisfying
\begin{align}
\label{model}
 \sum_{i,j=1}^2 a_{ij}\partial_{ij}^2 u+cu&=f, \quad \mbox{in } \Omega,\\
u&=0, \quad \mbox{on } \partial\Omega, \label{model:bdrycond}
\end{align}
where $\Omega$ is an open bounded domain in $\mathbb R^2$
with a Lipschitz continuous boundary $\partial\Omega$, $\partial_{ij}^2 $
is the second order partial derivative operator with respect to $x_i$
and $x_j$ for $i,j=1, 2$.
Assume that the tensor $a(x)=\{a_{ij}(x)\}_{2\times 2}$ is
symmetric positive definite and uniformly bounded over $\Omega$, the 
coefficient $c(x)$ is non-positive and uniformly bounded over $\Omega$. 
In addition, we assume that the coefficients $a_{ij}(x)$  are 
essentially bounded so that the second order model problem 
(\ref{model}) cannot be
rewritten in a divergence form  which many existing numerical
methods can be employed to solve.

For convenience, we shall assume that the model problem (\ref{model})
has a unique strong solution $u\in H^2(\Omega)$ 
satisfying the $H^2$ regularity:
\begin{equation}
\label{regularity}
\| u\|_{H^2(\Omega)}\le C \|f\|_{L^2(\Omega)}
\end{equation}
for a positive constant $C$. Particularly, when $\Omega$ is a convex
domain with $C^2$ smooth boundary, one can show that there exists a strong solution $u\in  H^2(\Omega)$
of (\ref{model})  satisfying the $H^2$ regularity (\ref{regularity})
if  the coefficient  $c(x)\equiv 0$  and  the coefficients $a_{ij} 
(i,j=1,2)$ satisfy the Cord\'es condition  (cf. \cite{MPS00}):
\begin{equation}
\label{cordes}
\frac{ \sum_{i,j=1}^2 a_{ij}^2}{(\sum_{i=1}^2 a_{ii})^2} \le \frac{1}{1+
\epsilon},\qquad \text{in} \quad \Omega,
\end{equation}
for a positive number $\epsilon \in (0, 1]$.  This condition is
reasonable in $\mathbb{R}^2$ in the sense that
when the coefficient tensor $a(x)$ satisfies  the standard uniform 
ellipticity, i.e.,
there exist two positive numbers $\lambda_1$ and $\lambda_2$ such that
\begin{equation}
\label{ellipticity}
\lambda_1  \xi^\top \xi  \le    \xi^\top  a(x) \xi  \le \lambda_2
\xi^\top \xi, \quad \forall \xi \in
\mathbb{R}^2, x\in \Omega,
\end{equation}
then the Cord\'es condition holds true in $\mathbb{R}^2$ 
(cf. \cite{MPS00}). Indeed, 
when $c\equiv 0$, the existence of the unique strong solution
$u$ satisfying (\ref{regularity}) can be found in \cite{MPS00}
based on a contraction map argument. Recently, in \cite{SS13}, the
researchers weakened the assumption on the $C^2$ smooth boundary and  
use the well-known Lax-Milgram theorem to establish the 
weak solution  to (\ref{model}) with $c\equiv 0$ 
when $\Omega$ is convex with Lipschitz differentiable boundary 
$\partial\Omega$. In fact, as the solution $u\in H^2(\Omega)$, the
weak solution is the strong solution.  
It is easy to see that the proofs in \cite{MPS00} and \cite{SS13} can 
be extended to establish the existence and uniqueness of the strong 
solution in $H^2(\Omega)$ for the PDE in (\ref{model}) when 
$\Omega$ is convex as each convex domain automatically 
has Lipschitz differentiable boundary, i.e. $C^{1,1}$ boundary.  
Numerical approximations of $u$ using the discontinuous Galerkin (DG) 
method and the weak Galerkin (WG) method  
have been studied in \cite{SS13} and \cite{WW15} 
to solve (\ref{model}) with $c\equiv 0$. 
The convergence and convergence rates of these two numerical methods
were established together with  numerical evidence of convergence over 
convex and nonconvex domains. 

In this paper, our goal is to provide another efficient computational 
method for numerical solution of (\ref{model}). More precisely, 
we propose a bivariate spline method based on the minimization of 
the jumps of functions across edges and the boundary condition.  
Bivariate splines are discontinuous piecewise polynomial functions 
which are written in Bernstein-B\'ezier polynomial form 
(cf. \cite{LS07}) and the smoothness constraints 
across an edge $e$ of triangulation $\triangle$ are written in terms of 
the coefficients of polynomials over the two triangles 
sharing the edge $e$. In particular,  smoothness conditions 
for any smoothness $r\ge 0$ across
edges of any order are implemented in MATLAB which can be simply used.
This is an improvement over the internal penalties in the DG method. 
Bivariate splines have been used for numerical solutions of 
various types of PDE. See \cite{ALW06}, \cite{LW04}, \cite{HHL07},  
\cite{A15}, \cite{GLS15}, \cite{S15}, and etc.. They can be very 
convenient for numerical solutions of this type of PDE. See an 
extensive numerical evidence in \S 6.
  
Note that in \cite{SS13}, an hp-version discontinuous Galerkin finite
element method was proposed and analyzed. It was based on a variational 
form arising from testing the original PDE against the Laplacian of 
sufficiently smooth functions (e.g., twice differentiable functions 
in $L^2$). 
According to the researchers in \cite{SS13},  their method yielded an 
optimal order of convergence regarding to the mesh size $h$, i.e. 
$d-1$ for polynomial degree $d=2, 3, 4, 5$.  
We use the same $C^1$ test function for 
a PDE with non-differentiable coefficients and provide an evidence 
that the convergence rate of $|u-S_u|_{H^2(\Omega)}$ using 
bivariate spline method  is also 
$d-1$ for $d=5, 6, 7, 8$  when $c\equiv 0$. In addition, we present 
numerical convergence of $u-S_u$ in $L^2$ norm and $H^1$ 
semi-norm which can be better than $d-1$ for various $d$. When $c\not
=0$, we have the same convergence behavior. In particular,  
the convergence rate of $|u-S_u|_{H^2(\Omega)}$ is still $d-1$. 
 
Also, it is worthy pointing out that the 
researchers in \cite{WW15} proposed a  primal-dual weak Galerkin finite 
element method for this type of PDE (i.e. $c\equiv 0$), yielding an 
optimal order error estimate  in a discrete $H^2$-norm 
for the primal variable and in the $L^2$-norm for the dual 
variable,  as well as  a convergence theory for the primal variable 
in the $H^1$- and $L^2$-norms. 
Although the numerical method implemented in 
\cite{WW15} works for finite element partitions consisting of arbitrary 
polygons or polyhedra, only polynomials 
of degree 2 over triangulation were implemented and 
used for numerical solution. 
The flexibility of using bivariate splines of various degrees make our 
method more convenient to increase 
the accuracy of  solutions.  More accurate numerical solutions than 
the ones in \cite{WW15} are presented  in this paper to demonstrate 
the advantage of our bivariate spline method.  

The theoretical study in this paper uses 
the same assumptions as in \cite{WW15}
and extend  their arguments in \cite{WW15}  to analyze the PDE in 
(\ref{model}) with nonzero $c$ and  
the convergence of the bivariate spline method. 
Our analysis in this paper is significantly different from that 
for discontinuous Galerkin method in \cite{SS13}. 
The paper is organized as follows: We first start with an explanation of
the primal-dual discontinuous Galerkin method to solve (\ref{model}) in 
the next section. 
Mainly, we establish some basic properties such as the existence, 
uniqueness, stability of the method in Section~\ref{sec:properties}. 
Then in Section~\ref{sec:EE} we present an error analysis of the 
numerical solution. Next we reformulate the primal-dual discontinuous 
Galerkin algorithm based on the bivariate 
spline functions and  explain our implementation based on bivariate 
splines explained in \cite{ALW06}. 
Extensive numerical results are reported in Section~\ref{sec:nr}. We 
start with a PDE with smooth coefficients
and test on a smooth solution to demonstrate that the bivariate spline 
method works very well. For comparison
purpose, we use the PDE in (\ref{model}) with $c\equiv 0$. Then we 
apply the bivariate spline method to 
numerically solve a few PDE with non-differentiable coefficients and 
non-smooth solutions as well as $c\not=0$. Our method is able
to approximate the nonsmooth solution very well. 
Therefore, the bivariate spline method is effective and efficient. 

\section{A Primal-Dual Discontinuous Galerkin Scheme}  
Our model problem seeks for a function $u\in H^2(\Omega)$ satisfying
$u|_{\partial \Omega}=0$ and
\begin{equation}\label{weak}
(\sum_{i,j=1}^2 a_{ij}\partial_{ij}^2 u+cu,w) =(f,w), \qquad \forall
w\in L^2(\Omega).
\end{equation}

Let ${\cal T}_h$ be a polygonal finite
element partition of the domain $\Omega \subset \mathbb R^2$. Denote by ${\mathcal
E}_h$ the set of all edges in ${\cal T}_h$ and ${\mathcal
E}_h^0={\mathcal E}_h \setminus \partial\Omega$ the set of all
interior edges. Assume that ${\cal T}_h$ satisfies the shape
regularity conditions described in \cite{BS94, wy3655}.
Denote by $h_T$ the diameter  of $T\in {\cal T}_h$ and $h=\max_{T\in
{\cal T}_h}h_T$ the mesh size of the partition ${\cal T}_h$. Let $k \geq 0$ be an integer. Let $P_k(T)$ be  the space of polynomials of degree no more than $k$ on the element $T\in {\cal T}_h$.

For any given integer $k \geq 2$,  we define the finite element spaces composed of piecewise polynomials of degree $k$ and $k-2$, respectively; i.e., 
$$
X_h=\{u: u|_T\in P_k(T), \quad \forall T\in {\cal T}_h \},  
$$
$$
M_h=\{u: u|_T\in P_{k-2}(T), \quad  \forall T\in {\cal T}_h \}. 
$$
 
Denote by $\jump{v}$ the jump of $v$ on an 
edge  $e \in {\mathcal E}_h$; i.e.,  
\begin{equation}\label{jump}
 \begin{split}
   \jump{v}=\left\{
\begin{array}{ll}
v|_{T_1}-v|_{T_2}, & e=(\partial T_1\cap \partial T_2)\subset
{\mathcal E}_h^0, \\
v, &  e\subset \partial\Omega,
 \end{array}
\right.
 \end{split}
\end{equation}
where $v|_{T_i}$ denotes the value of $v$ as seen from the element $T_i$,
$i=1,2$. The order of $T_1$ and $T_2$ is non-essential in
(\ref{jump}) as long as the difference is taken in a consistent way
in all the formulas. Analogously, one may define the jump of the
gradient of $u$ on an edge $e \in{\mathcal E}_h$, denoted by  $\jump{\nabla u}$.

For  any $v\in X_h$,   
 the  quadratic functional $J(v) $ is given by 
\begin{equation}\label{EQ:functional}
\begin{split}
J(v) = & \frac12 \sum_{e\in \E_h} h_T^{-3} \langle \jump{v}, \jump{v}
\rangle_e + \frac12 \sum_{e\in \E_h^0} h_T^{-1} \langle \jump{\nabla
v}, \jump{\nabla v} \rangle_e.
\end{split}
\end{equation}
It is clear that $J(v)=0$ if and only if $v\in C^1(\Omega)\cap X_h$
with  the homogeneous Dirichlet boundary data $v = 0$ on $\partial\Omega$.

We introduce a bilinear form
\begin{equation}\label{bilinearform2}
b_h(v, q)= \sum_{T\in\T_h} ( \sum_{i,j=1}^2 a_{ij}\partial_{ij}^2 v+cv,
q)_T, \quad  \forall v\in X_h, \ \forall q\in M_h.
\end{equation} 

The numerical solution  of  the model problem
(\ref{model})-(\ref{model:bdrycond}) can be characterized a constrained minimization problem as follows: {\em
Find $u_h\in X_h$ such that
\begin{equation}\label{DG:ConstrainedOpt}
u_h=\argmin_{v\in X_{h}, \ b_h(v, q)=(f,q),\ \forall q\in M_{h}}
J(v).
\end{equation}
}

 By introducing the following bilinear form
\begin{equation}\label{bilinearform}
s_h(u, v) = \sum_{e\in \E_h} h_T^{-3} \langle \jump{u}, \jump{v}
\rangle_e + \sum_{e\in \E_h^0} h_T^{-1} \langle \jump{\nabla v},
\jump{\nabla v} \rangle_e,\quad \forall u, v\in X_h, 
\end{equation}
the constrained minimization problem (\ref{DG:ConstrainedOpt}) has
an Euler-Lagrange formulation that gives rise to a system of linear
equations by taking the Fr\'echet derivative. The 
Euler-Lagrange formulation for the constrained minimization
algorithm (\ref{DG:ConstrainedOpt}) gives   the following numerical scheme.
\begin{algorithm}\emph{(Primal-Dual Discontinuous Galerkin FEM)}
\label{ALG:primal-dual-dg-fem}A numerical approximation of the
second order elliptic problem (\ref{model})-(\ref{model:bdrycond}) seeks to
find $(u_h;\lambda_h)\in X_{h} \times M_{h}$ satisfying
\begin{eqnarray}\label{EQ:PD-DG:001}
s_h(u_h,v)+b_h(v, \lambda_h)&=& 0,\ \ \ \ \qquad \forall v\in X_{h},\\
b_h(u_h, q)&=&(f,q),\qquad \forall q\in M_{h}.\label{EQ:PD-DG:002}
\end{eqnarray}
\end{algorithm}

\section{Existence, Uniqueness  and Stability\label{sec:properties}}
In this section, we will derive the existence, uniqueness, and
stability for the solution $(u_h;\lambda_h)$  of the  primal-dual
discontinuous Galerkin scheme
(\ref{EQ:PD-DG:001})-(\ref{EQ:PD-DG:002}).

For each element $T\in {\cal T}_h$, let $B_T$ be  the largest disk inside of $T$ centered at $c_0$  with    radius   $r$
and $F_{k,B_T}(f)$ be the averaged Taylor polynomial of degree $k$
for $f\in L^1(T)$ (see page 4 of \cite{LS07} for details). Note that
the averaged Taylor polynomial $F_{k,B_T}(f)$ satisfies (cf. Lemma
1.5 in \cite{LS07})
\begin{equation} \label{LSp41}
{\partial_{ij}^2 F_{k,B_T}(f)} = F_{k-2,B_T} ({\partial_{ij}^2f})
\end{equation}
if $\partial_{ij} f\in L^1(T)$. Let $P_{X_h}(f)$ and $P_{M_h}(f)$ be
interpolations/projections of $f$ onto the spaces $X_h$ and $M_h$ defined by
$P_{X_h}(f)|_T= F_{k,B_T}(f)$ and $P_{M_h}(f)|_T= F_{k-2,B_T}(f)$ on
each element $T\in {\cal T}_h$, respectively. Using (\ref{LSp41}) gives rise to   \begin{equation}
\label{LSp4} {\partial_{ij}^2 P_{X_h}(f) } =
P_{M_h}({\partial_{ij}^2 f}),
\end{equation}
on each element  $T\in {\cal T}_h$,

 \begin{lemma} \cite{LS07}
\label{newproperty2-ww} The interpolant operators $P_{X_h}$ and  $P_{M_h}$ are
bounded in $L^2(\Omega)$. In other words, for any $f\in L^2(\Omega)$
we have
\begin{equation}\label{part1}
  \|P_{X_h}(f)\| \le C  \|f\|, 
\end{equation}
\begin{equation}\label{part2}
  \|P_{M_h}(f)\| \le C  \|f\|,
\end{equation}
where $C$ is a constant depending only on the shape parameter
$\theta_{{\cal T}_h}=\max_{T\in {\cal T}_h} \frac{h_T}{\rho_T}$,
$\rho_T$ is the radius of the largest inscribed circle of $T$.
\end{lemma}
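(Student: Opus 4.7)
The plan is to reduce (\ref{part1}) and (\ref{part2}) to a local, elementwise estimate of the form $\|F_{k,B_T}(f)\|_{L^2(T)}\le C\|f\|_{L^2(T)}$ valid for each $T\in\mathcal{T}_h$, with a constant $C$ depending only on $k$ and the shape regularity parameter $\theta_{\mathcal{T}_h}$. Since $P_{X_h}(f)$ and $P_{M_h}(f)$ are defined piecewise by averaged Taylor polynomials on each $T$, squaring and summing over $T\in\mathcal{T}_h$ immediately gives the two global estimates; the second one follows from the same argument with $k$ replaced by $k-2$.

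For the elementwise bound I would start from the definition of the averaged Taylor polynomial,
\begin{equation*}
F_{k,B_T}(f)(x)=\int_{B_T}\Big(\sum_{|\alpha|\le k}\tfrac{1}{\alpha!}D^{\alpha}f(y)\,(x-y)^{\alpha}\Big)\phi(y)\,dy,
\end{equation*}
where $\phi\in C_c^{\infty}(B_T)$ is a fixed cutoff with $\int\phi=1$. Using integration by parts to move all derivatives off $f$ (this is the standard manipulation recorded in \cite{LS07}), one rewrites $F_{k,B_T}(f)(x)=\int_{B_T}K_T(x,y)f(y)\,dy$ for a smooth kernel $K_T$ depending only on $B_T$ and $\phi$. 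Applying Cauchy--Schwarz in the inner integral and integrating $x$ over $T$ then yields $\|F_{k,B_T}(f)\|_{L^2(T)}^2\le \|K_T\|_{L^2(T\times B_T)}^2\|f\|_{L^2(T)}^2$, so the whole task boils down to controlling $\|K_T\|_{L^2(T\times B_T)}$ uniformly in $T$.

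To obtain the required uniformity I would pull back to a reference triangle $\hat T$ via the affine map $F_T:\hat T\to T$. The key observation is that the averaged Taylor polynomial is covariant under affine transformations: $F_{k,B_T}(f)\circ F_T = F_{k,\hat B_T}(f\circ F_T)$, where $\hat B_T:=F_T^{-1}(B_T)\subset \hat T$. Shape regularity forces the radius of $\hat B_T$ to be bounded below by a positive constant depending only on $\theta_{\mathcal{T}_h}$, so the pulled-back disks vary in a compact family of admissible disks in $\hat T$. On this compact family the reference kernel $\hat K$ is uniformly bounded in $C^{\infty}(\hat T\times\hat T)$, which gives a uniform $L^2$-bound $\|F_{k,\hat B_T}(g)\|_{L^2(\hat T)}\le \hat C\|g\|_{L^2(\hat T)}$. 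Transferring back through the change of variables, the Jacobian $|\det DF_T|$ appears identically on both sides and cancels, producing the desired bound on $T$.

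The main obstacle is precisely this uniformity step: showing that the implicit constant in the reference-element estimate does not degenerate as $\hat B_T$ varies. Once the lower bound on the radius of $\hat B_T$ coming from shape regularity is pinned down, however, a standard compactness-and-continuity argument for the kernel closes this gap, and the elementwise $L^2$-stability then assembles into the global bounds (\ref{part1}) and (\ref{part2}).
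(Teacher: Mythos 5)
The paper gives no proof of this lemma at all---it is imported verbatim from \cite{LS07}---so the only benchmark is the standard argument in that reference, and your proposal reproduces it faithfully: represent $F_{k,B_T}(f)$ as an integral operator with a smooth kernel via integration by parts, apply Cauchy--Schwarz on each element (using $B_T\subset T$), and square and sum over $\mathcal{T}_h$. The one step you flag as the "main obstacle" is also the one place where your route is more delicate than necessary: under a general affine pullback $F_T^{-1}(B_T)$ is an ellipse rather than a disk, and the covariance identity only holds if the cutoff is pulled back together with its Jacobian normalization, so the compactness argument must range over a family of non-standard cutoffs. A cleaner closing of that gap, and the one that makes the dependence on $\theta_{\mathcal{T}_h}$ explicit, is a direct kernel estimate: each term of $K_T$ is bounded by $C\,(h_T/\rho_T)^{k}\rho_T^{-2}\le C(\theta_{\mathcal{T}_h},k)\,|B_T|^{-1}$, whence $\|K_T\|_{L^2(T\times B_T)}^2\le C^2|T|/|B_T|\le C'(\theta_{\mathcal{T}_h},k)$, with no reference element or compactness needed.
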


Recall that ${\cal T}_h$ is a shape-regular finite element partition of the domain $\Omega$. For any $T\in {\cal T}_h$ and $\phi\in H^1(T)$, the following trace inequality holds true:
\begin{equation}\label{tracein}
 \|\phi\|^2_{\partial T} \leq C(h_T^{-1}\|\phi\|_T^2+h_T \|\nabla \phi\|_T^2).
\end{equation}

Deonte by $Q_{k-2}$ the $L^2$ projection onto the finite element space $M_h$. 
We introduce a semi-norm in the finite element space $X_h$, denoted by  $\3bar\cdot\3bar$; i.e.,
\begin{equation}
\label{newnorm} \3bar v \3bar  =\Big( \sum_{T\in {\cal T}_h}
\| Q_{k-2}( \sum_{i,j=1}^2a_{ij}\partial^2_{ij} v+cv)\|_T^2 +
s_h(v,v)\Big)^{\frac{1}{2}}, \qquad  v\in X_h.
\end{equation}

   The following result shows that $\3bar\cdot\3bar$ defined in (\ref{newnorm}) is indeed a norm on $X_h$
when the meshsize $h$ is sufficiently small.
\begin{lemma}\label{WWideas} Assume that the $H^2$ regularity
(\ref{regularity}) holds true for the model problem
(\ref{model})-(\ref{model:bdrycond}), and that the coefficient
tensor $a(x)=\{a_{ij}(x)\}_{2 \times 2}$ and  $c(x)$ are uniformly piecewise
continuous in $\Omega$ with respect to the finite element partition
${\cal T}_h$. Then, there exists an $h_0>0$ such that $\3bar
\cdot\3bar$ in (\ref{newnorm}) defines a norm on $X_h$ when the
meshsize $h$ is sufficiently small such that $h \le h_0$.
\end{lemma}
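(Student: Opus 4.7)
The functional $\3bar\cdot\3bar$ is visibly a semi-norm, so definiteness is the only issue: I plan to show that $\3bar v\3bar=0$ with $h$ sufficiently small forces $v\equiv 0$.

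Assume $\3bar v\3bar = 0$. Then $s_h(v,v)=0$, which by the observation after \eqref{EQ:functional} yields $v\in C^1(\Omega)\cap X_h$ with $v=0$ on $\partial\Omega$; in particular $v\in H^2(\Omega)\cap H^1_0(\Omega)$. Set $g:=\sum_{i,j=1}^2 a_{ij}\partial_{ij}^2 v + cv\in L^2(\Omega)$. The other half of $\3bar v\3bar=0$ gives $Q_{k-2}g=0$ elementwise, so $\|g\|_T=\|(I-Q_{k-2})g\|_T$ for every $T\in\mathcal{T}_h$. The plan is to estimate this residual by $\omega(h)\,\|v\|_{H^2}$, where $\omega(h)\to 0$ as $h\to 0$, and then invoke the $H^2$ regularity \eqref{regularity} applied to $v$ (which solves the homogeneous-boundary-value problem with right-hand side $g$) to reach $\|v\|_{H^2}\le C\|g\|$; combining these two bounds yields $\|g\|\le C\omega(h)\|g\|$, forcing $g=0$, and then uniqueness of the strong solution forces $v=0$.

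The substantive step is the residual estimate. On each $T$, pick a point $c_0\in T$ and set $\bar a_{ij,T}:=a_{ij}(c_0)$, $\bar c_T:=c(c_0)$. Because $\partial_{ij}^2 v|_T\in P_{k-2}(T)$, the polynomial $\bar a_{ij,T}\partial_{ij}^2 v$ lies in the kernel of $I-Q_{k-2}$, so
\begin{equation*}
(I-Q_{k-2})\bigl(a_{ij}\partial_{ij}^2 v\bigr)\big|_T = (I-Q_{k-2})\bigl((a_{ij}-\bar a_{ij,T})\partial_{ij}^2 v\bigr)\big|_T.
\end{equation*}
Using $\|I-Q_{k-2}\|_{L^2(T)\to L^2(T)}\le 1$ and the uniform piecewise continuity of $a_{ij}$ one gets $\|(I-Q_{k-2})(a_{ij}\partial_{ij}^2 v)\|_T\le \omega(h)\|\partial_{ij}^2 v\|_T$. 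For the zero-order term write $cv=(c-\bar c_T)v+\bar c_T v$, which yields $\|(I-Q_{k-2})(cv)\|_T\le \omega(h)\|v\|_T + |\bar c_T|\,\|(I-Q_{k-2})v\|_T$; the last factor is controlled by the standard Bramble--Hilbert bound $\|(I-Q_{k-2})v\|_T\le Ch^2|v|_{H^2(T)}$, using that $Q_{k-2}$ preserves $P_1\subset P_{k-2}$ (recall $k\ge 2$). Squaring, summing over $T$, and absorbing the uniform bounds on $\bar c_T$ gives
\begin{equation*}
\|g\|^2 \le C\bigl(\omega(h)^2+h^4\bigr)\bigl(|v|_{H^2(\Omega)}^2+\|v\|^2\bigr)\le C\eta(h)^2\|v\|_{H^2(\Omega)}^2,
\end{equation*}
with $\eta(h)\to 0$ as $h\to 0$.

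Finally, since $v\in H^2(\Omega)\cap H^1_0(\Omega)$ and $\sum a_{ij}\partial_{ij}^2 v+cv=g$ in $L^2(\Omega)$, the regularity estimate \eqref{regularity} delivers $\|v\|_{H^2}\le C\|g\|$. Combined with the residual bound, $\|g\|\le C\eta(h)\|g\|$; choosing $h_0$ so small that $C\eta(h_0)<1$ forces $g=0$, whence $v\in H^2\cap H^1_0$ solves the homogeneous problem and the assumed well-posedness implies $v=0$. The main obstacle in this plan is the approximation estimate for the $cv$ contribution: the piece $\bar c_T(I-Q_{k-2})v$ does not vanish (because $v\in P_k$, not $P_{k-2}$), and one must spend a Bramble--Hilbert argument to see that it is $O(h^2)$, small enough to be absorbed alongside the $\omega(h)$ terms coming from the non-constancy of the coefficients.
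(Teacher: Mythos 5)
Your proof is correct and follows essentially the same route as the paper's: both reduce the claim to definiteness, use $s_h(v,v)=0$ to conclude $v\in C^1(\Omega)\cap H_0^1(\Omega)$, write the residual as $(I-Q_{k-2})$ applied to the operator, bound it by a coefficient-oscillation term of size $\varepsilon$ plus an $O(h)$ lower-order term, and close the loop with the $H^2$ regularity estimate. The one small slip is the claim that $Q_{k-2}$ preserves $P_1$: for $k=2$ one has $P_{k-2}=P_0$, so $\|(I-Q_{k-2})v\|_T$ is only $O(h)|v|_{H^1(T)}$ rather than $O(h^2)$ --- which is still enough to absorb the term, and is precisely the $Ch\|v\|_1$ bound the paper itself uses.
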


\begin{proof}
It suffices to verify the positivity property for $\3bar\cdot\3bar$.
To this end, note that for any $v\in X_h$ satisfying $\3bar v
\3bar=0$ we have $s_h(v, v)=0$. It follows that $\jump{v}=0$ on each
edge $e\in {\mathcal E}_h$ and $\jump{\nabla u}=0$ on each interior
edge $e\in {\mathcal E}_h^0$. Hence, $v\in C^1(\Omega)$ and $v=0$ on
$\partial\Omega$. In addition, on each element $T\in {\cal T}_h$, we
have
$$
Q_{k-2}( \sum_{i,j=1}^2a_{ij}\partial^2_{ij} v+cv)=0.
$$
Thus,
\begin{align*}
 \sum_{i,j=1}^2 a_{ij}\partial^2_{ij} v+cv  & = (I-Q_{k-2})
( \sum_{i,j=1}^2a_{ij}\partial^2_{ij} v+cv):=F.
\end{align*}
Using the $H^2$-regularity assumption (\ref{regularity}), there
exists a constant $C$ such that
\begin{equation}\label{h2}
 \|v\|_2\leq C\|F\|.
\end{equation}
Note that $a_{ij}(x)$ and $c(x)$ are uniformly piecewise continuous in $\Omega$
with respect to the finite element partition ${\cal T}_h$. Let
$\bar{a}_{ij}$ and $\bar{c}$  be the average of $a_{ij}$  and $c$ on each element $T\in
{\cal T}_h$. Then, for any $\varepsilon>0$, there exists a $h_0>0$
such that
$$
\|a_{ij}-\bar{a}_{ij}\|_{L^{\infty}(\Omega)}\leq \varepsilon,\qquad \|c-\bar{c} \|_{L^{\infty}(\Omega)}\leq \varepsilon,
$$ 
if the meshsize $h$ is  sufficiently small such that $h\le h_0$. Denote by $\bar{c}$ and $\bar{v}$ the average of $c$ and $v$ on each element $T\in {\cal T}_h$, respectively.  It follows from the
linearity of the projection $Q_{k-2}$ that
\begin{equation*}
\begin{split}
\|F\|  \le &\sum_{i,j=1}^2 |a_{ij}-\bar{a}_{ij}| \|\partial^2_{ij}v\|
+\sum_{i,j=1}^2 \|Q_{k-2}((a_{ij}-
\bar{a}_{ij})\partial^2_{ij}v)\|\\
&+  \|(I-Q_{k-2})(cv-\bar{c}\bar{v})\| \\
\leq & C \varepsilon \|v\|_2+ \|cv-\bar{c}\bar{v}\| \\ 
\leq & C \varepsilon \|v\|_2+ \|(c-\bar{c}) v+\bar{c}(v-\bar{v})\| \\ 
\leq & C \varepsilon \|v\|_2+C \varepsilon \|v\|+ Ch\|v\|_1\\
\leq & C \varepsilon \|v\|_2+Ch\|v\|_2,
\end{split}
\end{equation*}
where we have used the boundedness of the $L^2$ projection $Q_{k-2}$,  which, combined with (\ref{h2}),  gives
$$
\|v\|_2\leq C (\varepsilon +h)\|v\|_2.
$$
This yields that $v=0$ as long as $\varepsilon$ is sufficiently
small such that $C \varepsilon < 1$, which can be
easily achieved  by adjusting the parameter $h_0$. This completes
the proof of the lemma.
\end{proof}
 
We are now in a position to establish an {\em inf-sup} condition for
the bilinear form $b_h(\cdot,\cdot)$.

\begin{lemma}(inf-sup  condition)
\label{infsup2} Under the assumptions of Lemma \ref{WWideas},   for any $q \in M_h$, there exists a $v_q
\in X_h$ such that
\begin{eqnarray}
\label{critical} b_h(v_q, q) & \ge &  \beta \|q\|^2,\\
\3bar v_q \3bar &\le & C \|q\|, \label{critical.002}
\end{eqnarray}
provided that the meshsize $h$ is sufficiently small.
\end{lemma}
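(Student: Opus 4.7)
The plan is to establish the inf-sup condition via a Fortin-type operator argument. Given $q\in M_h$, I would invoke the $H^2$ regularity assumption (\ref{regularity}) to solve the continuous auxiliary problem: find $w\in H^2(\Omega)\cap H^1_0(\Omega)$ satisfying $\sum_{i,j=1}^2 a_{ij}\partial_{ij}^2 w+cw=q$ in $\Omega$, with $\|w\|_{H^2(\Omega)}\le C\|q\|$. I would then set the candidate $v_q:=P_{X_h}(w)\in X_h$, so that the commutator identity (\ref{LSp4}) gives $\partial_{ij}^2 v_q=P_{M_h}(\partial_{ij}^2 w)$.

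For the stability bound (\ref{critical.002}), I would control the two pieces of $\3bar v_q\3bar$ separately. The $L^2$-boundedness of $P_{X_h}$ and $P_{M_h}$ (Lemma \ref{newproperty2-ww}) together with the boundedness of $Q_{k-2}$ and of the coefficients immediately yields $\sum_{T}\|Q_{k-2}(\sum a_{ij}\partial_{ij}^2 v_q+cv_q)\|_T^2\le C\|w\|_{H^2}^2\le C\|q\|^2$. For the jump part $s_h(v_q,v_q)$, I would exploit that $w$ is globally continuous with zero trace, so $\jump{v_q}=\jump{v_q-w}$ on every edge and $\jump{\nabla v_q}=\jump{\nabla(v_q-w)}$ on every interior edge. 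The trace inequality (\ref{tracein}) then reduces the edge estimates to standard $L^2$-approximation estimates for the averaged Taylor operator $\|v_q-w\|_T+h_T\|\nabla(v_q-w)\|_T\le Ch_T^2\|w\|_{H^2(\omega_T)}$, and summing gives $s_h(v_q,v_q)\le C\|w\|_{H^2}^2\le C\|q\|^2$.

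The lower bound (\ref{critical}) is the delicate part. Since $w$ solves the continuous PDE, $b_h(w,q)=(q,q)=\|q\|^2$; write $b_h(v_q,q)=\|q\|^2+b_h(v_q-w,q)$ so it suffices to show $|b_h(v_q-w,q)|\le(1-\beta)\|q\|^2$. Substitute $\partial_{ij}^2(v_q-w)=P_{M_h}(\partial_{ij}^2 w)-\partial_{ij}^2 w$ and split each coefficient into its element mean plus oscillation, $a_{ij}=\bar a_{ij}+(a_{ij}-\bar a_{ij})$ and $c=\bar c+(c-\bar c)$. The oscillation parts contribute at most $C\varepsilon\|w\|_{H^2}\|q\|\le C\varepsilon\|q\|^2$ by the uniform piecewise continuity assumption inherited from Lemma \ref{WWideas}, the $L^2$-boundedness of $P_{M_h}$, and Cauchy--Schwarz. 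For the mean-coefficient parts, the critical term $(\bar a_{ij}(P_{M_h}(\partial_{ij}^2 w)-\partial_{ij}^2 w),q)_T$ would be rewritten by inserting $Q_{k-2}$: since $q\in M_h$ and $\bar a_{ij}$ is constant on $T$, the $L^2$-orthogonality of $Q_{k-2}$ kills the $Q_{k-2}(\partial_{ij}^2 w)$ piece, leaving the difference $P_{M_h}(\partial_{ij}^2 w)-Q_{k-2}(\partial_{ij}^2 w)$, which lies in $M_h|_T$ and which I would control using the Bramble--Hilbert-style approximation estimates satisfied by both projection operators. An analogous element-average trick, as in the handling of $\|cv-\bar c\bar v\|$ in the proof of Lemma \ref{WWideas} applied to $v_q-\bar v_q$ in place of $v-\bar v$, converts the $\bar c(P_{X_h}(w)-w)$ contribution into an $O(h)\|w\|_{H^2}$ term.

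Combining these estimates produces $b_h(v_q,q)\ge(1-C(\varepsilon+h))\|q\|^2$, which, by choosing $h$ small enough that $\varepsilon+h$ is below a threshold determined by $C$, yields (\ref{critical}) with $\beta=\tfrac12$. The main obstacle I anticipate is precisely the replacement of $P_{M_h}$ by the $L^2$ projection $Q_{k-2}$ in the critical term: $P_{M_h}$ alone does not give $M_h$-orthogonality, so the argument must route through $Q_{k-2}$ via $q\in M_h$ while simultaneously using coefficient averaging to convert the non-orthogonal residual into a small-factor term, closely mirroring the mechanism that made the proof of Lemma \ref{WWideas} work.
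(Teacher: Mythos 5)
Your overall architecture coincides with the paper's: the auxiliary dual problem for $w$, the choice $v_q=P_{X_h}(w)$, the commutator identity (\ref{LSp4}), the coefficient-averaging to produce $O(\varepsilon)$ oscillation terms, and the treatment of the stability bound (\ref{critical.002}) via $\jump{v_q}=\jump{v_q-w}$, the trace inequality and (\ref{errorXh}) are all exactly what the paper does. The problem is in your handling of the ``critical term'' in the lower bound. You reduce it to $(\bar a_{ij}(P_{M_h}-Q_{k-2})\partial_{ij}^2 w,q)_T$ and propose to control $(P_{M_h}-Q_{k-2})\partial_{ij}^2 w$ by Bramble--Hilbert-type estimates. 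But $w$ has only the $H^2$ regularity $\|w\|_2\le C\|q\|$, so $\partial_{ij}^2 w$ is merely an $L^2$ function; both $P_{M_h}$ and $Q_{k-2}$ reproduce $P_{k-2}(T)$, hence $\|(P_{M_h}-Q_{k-2})\partial_{ij}^2 w\|_T\le C\inf_{p\in P_{k-2}(T)}\|\partial_{ij}^2 w-p\|_T$, and for an $L^2$ function this infimum carries no power of $h$ and no small factor --- it is $O(\|\partial_{ij}^2 w\|_T)$ in general, uniformly over $q$. Summing gives only $C\|w\|_2\|q\|\le C\|q\|^2$ with a constant that is not small, which destroys the coercivity $b_h(v_q,q)\ge\beta\|q\|^2$.

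The paper avoids ever isolating a single $\partial_{ij}^2 w$. Its mechanism is to commute the elementwise constant $\bar a_{ij}$ through the averaged Taylor operator, $\bar a_{ij}P_{M_h}(\partial_{ij}^2 w)=P_{M_h}(\bar a_{ij}\partial_{ij}^2 w)$, then reassemble the full operator under one projection and use the reproduction property $P_{M_h}\bigl(\sum_{i,j}a_{ij}\partial_{ij}^2 w+cw\bigr)=P_{M_h}(q)=q$, since $q|_T\in P_{k-2}(T)$. The main term then comes out as $q$ exactly, and every remaining term carries an explicit factor $a_{ij}-\bar a_{ij}$ or $c-\bar c$ (hence $O(\varepsilon)$), except the mismatch $P_{X_h}(cw)-P_{M_h}(cw)$, which is handled by subtracting the elementwise constant $\bar c\bar w$ (reproduced by both operators) and bounding $\|cw-\bar c\bar w\|\le C(\varepsilon+h)\|w\|_2$. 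Note that it is only the particular combination $\sum a_{ij}\partial_{ij}^2 w+cw=q$ that is well approximated by $M_h$; no individual second derivative is. Your proof would be repaired by keeping the sum over $i,j$ together and routing the main term through $P_{M_h}q=q$ rather than through the orthogonality of $Q_{k-2}$.
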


\begin{proof}
Consider an auxiliary problem that seeks $w\in H^2(\Omega)\cap
H_0^1(\Omega)$ satisfying
\begin{equation}
\label{PDEp} \sum_{i,j=1}^2 a_{ij} \partial^2_{ij} w +cw= q,\qquad
\text{in}\ \Omega.
\end{equation}
From the regularity assumption (\ref{regularity}), it is easy to know that  the problem
(\ref{PDEp}) has one and only one solution, and furthermore, the
solution satisfies  the $H^2$
regularity property; i.e.,
\begin{equation}
\label{reg} \|w\|_2\le  C \|q\|.
\end{equation}
By letting $v_q= P_{X_h}(w)$, from (\ref{LSp4}) we obtain
\begin{align*}
\partial^2_{ij} v_q  = \partial^2_{ij} P_{X_h}(w)
 =  P_{M_h}(\partial^2_{ij} w).
\end{align*}
Letting ${\bar a}_{ij}$ be  the average of $a_{ij}$ over $T\in {\cal T}_h$,
we arrive at
\begin{align*}
&\sum_{i,j=1}^2 a_{ij} \partial^2_{ij} v_q+c v_q  \\
=& \sum_{i,j=1}^2 \{(a_{ij} - {\bar a}_{ij})
P_{M_h}(\partial^2_{ij} w) + P_{M_h}( {\bar a}_{ij} \partial^2_{ij} w) \}+ (c-\bar{c})P_{X_h}(w)+P_{X_h}(\bar{c}  w) \\
  =& \sum_{i,j=1}^2 \{(a_{ij} - {\bar a}_{ij}) P_{M_h}(\partial^2_{ij} w) + P_{M_h}(
({\bar a}_{ij}-a_{ij}) \partial^2_{ij} w)
+P_{M_h}(a_{ij}\partial^2_{ij} w)\}\\
&+ (c-\bar{c})P_{X_h}(w)+P_{X_h}((\bar{c}-c)  w)+P_{X_h}(cw)\\
 =& \sum_{i,j=1}^2 \{(a_{ij} - {\bar a}_{ij}) P_{M_h}(\partial^2_{ij} w) + P_{M_h}(
({\bar a}_{ij}-a_{ij}) \partial^2_{ij} w)\}+ (c-\bar{c})P_{X_h}(w)\\
& +P_{X_h}((\bar{c}-c)  w) +P_{M_h}( \sum_{i,j=1}^2 a_{ij}\partial^2_{ij} w+cw) +P_{X_h}(cw)-P_{M_h}(cw)\\
 =& E_T+ q+P_{X_h}(cw)-P_{M_h}(cw). 
\end{align*}
where we have used (\ref{PDEp}) and $P_{M_h} q =q$. Here, $E_T=  \sum_{i,j=1}^2 \{(a_{ij} - {\bar a}_{ij}) P_{M_h}(\partial^2_{ij} w) + P_{M_h}(
({\bar a}_{ij}-a_{ij}) \partial^2_{ij} w)\}+ (c-\bar{c})P_{X_h}(w) 
  +P_{X_h}((\bar{c}-c)  w)$.  

With the above chosen $v_q$ as $P_{X_h}(w)$, we have
\begin{equation}\label{bf}
 \begin{split}
  b_h(v_q, q)& = \sum_{T\in \T_h} ( \sum_{i,j=1}^2 a_{ij}
\partial^2_{ij} P_{X_h}(w)+cP_{X_h}(w), q)_T \\
& = \sum_{T\in \T_h} (E_T, q)_T + \|q\|^2+\sum_{T\in \T_h}(P_{X_h}(cw)-P_{M_h}(cw), q)_T.
 \end{split}
\end{equation}
Note that the coefficient tensor $a(x)=\{  a_{ij}\}_{2\times 2}$ and $c(x)$ are
uniformly piecewise continuous over ${\cal T}_{h}$. Thus, for any
given suifficiently small  $\varepsilon>0$, we have $\|a_{ij}- {\bar
a}_{ij}\|_{L^\infty(\Omega)} \le  \varepsilon$ and $\|c- {\bar
c}\|_{L^\infty(\Omega)} \le  \varepsilon$ for sufficiently
small meshsize $h$. It then follows from the Cauchy-Schwarz
inequality, (\ref{part1}) - (\ref{part2}), and the $H^2$
regularity property (\ref{reg}) that
\begin{align*}
&\left| \sum_{T\in {\cal T}_h} ( E_T,q)_T \right|\\ \leq &
C\varepsilon \Big(\sum_{T\in {\cal T}_h} \sum_{i,j=1}^2\|P_{M_h}
(\partial^2_{ij}w)\|_{T}^2\Big)^{\frac{1}{2}} \Big(\sum_{T\in {\cal
T}_h}\|q\|_{T}^2\Big)^{\frac{1}{2}}+C\varepsilon \Big(\sum_{T\in {\cal T}_h} \sum_{i,j=1}^2\| 
 \partial^2_{ij}w \|_{T}^2\Big)^{\frac{1}{2}} \Big(\sum_{T\in {\cal
T}_h}\|q\|_{T}^2\Big)^{\frac{1}{2}}\\
&+C\varepsilon \Big(\sum_{T\in {\cal T}_h}  \|P_{X_h}
w\|_{T}^2\Big)^{\frac{1}{2}} \Big(\sum_{T\in {\cal
T}_h}\|q\|_{T}^2\Big)^{\frac{1}{2}}+C\varepsilon \Big(\sum_{T\in {\cal T}_h}  \| 
w\|_{T}^2\Big)^{\frac{1}{2}} \Big(\sum_{T\in {\cal
T}_h}\|q\|_{T}^2\Big)^{\frac{1}{2}}\\
 \leq &   C\varepsilon
\Big(\sum_{T\in {\cal T}_h} \sum_{i,j=1}^2\|
\partial^2_{ij}w \|_{T}^2\Big)^{\frac{1}{2}}
 \|q\|+ C\varepsilon
\Big(\sum_{T\in {\cal T}_h} \| w \|_{T}^2\Big)^{\frac{1}{2}}
 \|q\| \cr
\le &  C\varepsilon \|w\|_2\|q\| \leq   C \varepsilon \|q\|^2,
\end{align*}
and 
\begin{align*}
&\Big|\sum_{T\in {\cal T}_h} (P_{X_h}(cw)-P_{M_h}(cw), q)_T\Big| \\
\leq &\Big(\sum_{T\in {\cal T}_h} \|P_{X_h}(cw- \bar{c}\bar{w})-P_{M_h}(cw- \bar{c}\bar{w})\|^2_T\Big)^{\frac{1}{2}}\Big(\sum_{T\in {\cal T}_h}\|q\|^2_T\Big)^{\frac{1}{2}}\\
\leq &\Big(\sum_{T\in {\cal T}_h} \|cw- \bar{c}\bar{w}\|^2_T\Big)^{\frac{1}{2}}\Big(\sum_{T\in {\cal T}_h}\|q\|^2_T\Big)^{\frac{1}{2}}\\
\leq &\Big(\sum_{T\in {\cal T}_h} \|(c- \bar{c})w+\bar{c}(w-\bar{w})\|^2_T\Big)^{\frac{1}{2}}  \|q\| \\
\leq & (C\varepsilon \|w\|+Ch\| w\|_1)\|q\|\\
\leq & C (\varepsilon +h)\|w\|_2\|q\|\\
\leq & C  (\varepsilon +h)\|q\|^2,
\end{align*}
where $\bar{c}$ and $\bar{w}$ are the average of $c$ and $w$ on each element $T\in {\cal T}_h$, respectively,
  $C$ is a generic constant independent of ${\cal T}_h$.
Substituting the above estimate into (\ref{bf}) yields
$$
b_h(v_q,q)\ge (1- C (2\varepsilon+h)) \|q\|^2,
$$
which leads to the estimate (\ref{critical}) when the meshsize $h$
is sufficiently small.

It remains to derive the estimate (\ref{critical.002}). To this end,
recall that
\begin{equation}\label{1anorm5}
\3bar v_q \3bar^2 = \sum_{T\in{\cal T}_h}\| 
Q_{k-2}(\sum_{i,j=1}^2a_{ij}\partial^2_{ij} v_q+cv_q)\|^2_{T} + s_h(v_q, v_q).
\end{equation}
Letting $v_q= P_{X_h}(w)$, the first term on the
right-hand side of (\ref{1anorm5}) can be bounded by using (\ref{LSp4}),
(\ref{part1}) and (\ref{reg}) as follows:
\begin{equation}\label{1anorm5a}
\begin{split}
\sum_{T\in{\cal T}_h}\| Q_{k-2}(\sum_{i,j=1}^2a_{ij}\partial^2_{ij}
v_q+cv_q) \|^2_{T} & = \sum_{T\in{\cal T}_h}\|Q_{k-2}(\sum_{i,j=1}^2 a_{ij}\partial^2_{ij}
P_{X_h}(w)+cP_{X_h}(w))\|^2_{T} \\
 & \le \sum_{T\in{\cal T}_h}  \sum_{i,j=1}^2
\|a_{ij}P_{M_h}(\partial^2_{ij} w)\|_T+\|cP_{X_h}(w)\|^2_{T}  \\
& \le C \sum_{i,j=1}^2 \|a_{ij}\|^2_{L^{\infty}(\Omega)}  
\sum_{T\in{\cal T}_h} \| \partial^2_{ij} w\|_T^2+C  \|c\|^2_{L^{\infty}(\Omega)}  
\sum_{T\in{\cal T}_h} \|   w\|_T^2   \\
& \le C\|q\|^2.
\end{split}
\end{equation}

As to the term $s_h(v_q, v_q)$ in (\ref{1anorm5}), note that it is
defined by (\ref{bilinearform}) using the jump of $v_q$ on each edge
$e\in \E_h$ plus the jump of $\nabla v_q$ on each interior edge
$e\in \E_h^0$. For an interior edge $e\in\E_h^0$ shared by two
elements $T_1$ and $T_2$, we have
\begin{eqnarray*}
\jump{v_q}|_e &=& v_q|_{T_1\cap e} - v_q|_{T_2\cap e} \\
& = &  P_{X_h}(w) |_{T_1\cap e} - P_{X_h}(w) |_{T_2\cap e} \\
& = &  (P_{X_h}(w) |_{T_1\cap e} -w|_e) + (w|_e - P_{X_h}(w)
|_{T_2\cap e}).
\end{eqnarray*}
It follows that
\begin{equation}\label{EQ:Sep:16:001}
\langle \jump{v_q}, \jump{v_q}\rangle_e \leq 2
 \|P_{X_h}(w) |_{T_1\cap e} -w|_e\|_e^2 + 2 \|P_{X_h}(w) |_{T_2\cap e}
 -w|_e\|_e^2.
\end{equation}
Using the trace inequality (\ref{tracein}), we have
$$
\|P_{X_h}(w) |_{T_1\cap e} -w|_e\|_e^2 \leq C
h_T^{-1}\|P_{X_h}(w)-w\|^2_{T_1} + C h_T
\|\nabla(P_{X_h}(w)-w)\|^2_{T_1}.
$$
Analogously, the following holds true
$$
\|P_{X_h}(w) |_{T_2\cap e} -w|_e\|_e^2 \leq C
h_T^{-1}\|P_{X_h}(w)-w\|^2_{T_2} + C h_T
\|\nabla(P_{X_h}(w)-w)\|^2_{T_2}.
$$
Substituting the last two inequalities into (\ref{EQ:Sep:16:001})
yields
\begin{equation}\label{EQ:Sept:16:002}
\langle \jump{v_q}, \jump{v_q}\rangle_e \leq C\sum_{i=1}^2\left(
h_T^{-1}\|P_{X_h}(w)-w\|^2_{T_i} + C h_T
\|\nabla(P_{X_h}(w)-w)\|^2_{T_i} \right).
\end{equation}
For boundary edge $e\subset  \partial \Omega$, from $w|_{e\subset  \partial \Omega}=0$ we have
\begin{eqnarray*}
\jump{v_q}|_e &=& v_q|_{e}= P_{X_h}(w)|_{e} - w|_{e}.
\end{eqnarray*}
Thus, the estimate (\ref{EQ:Sept:16:002}) remains to hold true.
Summing (\ref{EQ:Sept:16:002}) over all the edges yields
\begin{equation}\label{EQ:Sept:16:003}
\begin{split}
\sum_{e\in\E_h} h_T^{-3} \langle \jump{v_q}, \jump{v_q}\rangle_e &
\leq C\sum_{T\in\T_h} \left(h_T^{-4}\|P_{X_h}(w)-w\|^2_{T} +
C h_T^{-2}\|\nabla(P_{X_h}(w)-w)\|^2_{T} \right)\\
& \le C \|w\|_2^2\end{split}
\end{equation}
where we have used the estimate (\ref{errorXh}) with $m=1$ and
$s=0,1$ in the last inequality. Combining (\ref{EQ:Sept:16:003})
with the regularity estimate (\ref{reg}) gives rise to
\begin{equation}\label{EQ:Sept:16:00488}
\sum_{e\in\E_h} h_T^{-3} \langle \jump{v_q}, \jump{v_q}\rangle_e \leq
C \|q\|^2.
\end{equation}
A similar argument can be applied to yield the following estimate
\begin{equation}\label{EQ:Sept:16:004}
\sum_{e\in\E_h^0} h_T^{-1} \langle \jump{\nabla v_q}, \jump{\nabla
v_q}\rangle_e \leq C \|q\|^2.
\end{equation}
We emphasize that the summation in (\ref{EQ:Sept:16:004}) is taken
over all the interior edges so that no boundary value for $\nabla w$
is needed in the derivation of the estimate (\ref{EQ:Sept:16:004}).
Combining (\ref{EQ:Sept:16:00488}) and (\ref{EQ:Sept:16:004}) with
$s_h(v_q,v_q)$ yields
$$
s_h(v_q,v_q) \leq C \|q\|^2,
$$
which, together with (\ref{1anorm5a}), completes the derivation of
the estimate (\ref{critical.002}).
\end{proof}

\begin{lemma}\label{boundlem}
(Boundedness) The following inequalities hold true:
\begin{equation*}
 \begin{split}
  |s_h(u,v)|\leq & \3bar u\3bar \3bar v\3bar, \qquad \forall u,v\in X_h,\\
|b_h(v,q)|\leq & C \3bar v\3bar \|q\|, \qquad \forall v\in X_h, q\in
M_h.
 \end{split}
\end{equation*}
 \end{lemma}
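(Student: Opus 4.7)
The plan is to handle both inequalities using only the definition of $\3bar\cdot\3bar$, the symmetry/positivity of $s_h$, and the defining property of the $L^2$ projection $Q_{k-2}$. Neither should present any real obstacle; both follow from Cauchy--Schwarz once the two terms in the triple norm are used appropriately.

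For the first bound, observe that $s_h(\cdot,\cdot)$, read with the obvious symmetric convention $\langle \jump{\nabla u},\jump{\nabla v}\rangle_e$ in the second sum of (\ref{bilinearform}), is a symmetric positive semi-definite bilinear form on $X_h$. It therefore satisfies the standard Cauchy--Schwarz inequality
$$|s_h(u,v)|\le s_h(u,u)^{1/2}\,s_h(v,v)^{1/2}.$$
By the definition (\ref{newnorm}) of $\3bar\cdot\3bar$, the term $s_h(v,v)$ appears nonnegatively in $\3bar v\3bar^2$, so $s_h(v,v)\le \3bar v\3bar^2$ and similarly for $u$. Multiplying gives the desired estimate $|s_h(u,v)|\le\3bar u\3bar\,\3bar v\3bar$.

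For the second bound, the key observation is that $q\in M_h$ means $q|_T\in P_{k-2}(T)$, so $Q_{k-2}q=q$ elementwise and the defining property of the $L^2$ projection yields $(\phi,q)_T=(Q_{k-2}\phi,q)_T$ for every $\phi\in L^2(T)$. Applying this elementwise to $\phi=\sum_{i,j=1}^2 a_{ij}\partial_{ij}^2 v+cv$ in the definition (\ref{bilinearform2}) gives
$$b_h(v,q)=\sum_{T\in\T_h}\bigl(Q_{k-2}(\textstyle\sum_{i,j=1}^2 a_{ij}\partial_{ij}^2 v+cv),\,q\bigr)_T.$$
The Cauchy--Schwarz inequality applied on each $T$ and then over the partition gives
$$|b_h(v,q)|\le\Bigl(\sum_{T\in\T_h}\|Q_{k-2}(\textstyle\sum_{i,j=1}^2 a_{ij}\partial_{ij}^2 v+cv)\|_T^2\Bigr)^{1/2}\|q\|,$$
and the first factor on the right is bounded by $\3bar v\3bar$ directly from (\ref{newnorm}). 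This yields $|b_h(v,q)|\le C\3bar v\3bar\,\|q\|$ (indeed with $C=1$).

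The only subtle point worth flagging is that if the formula (\ref{bilinearform}) is read literally as having $\jump{\nabla v}$ in both factors of the second sum, then $s_h(u,v)$ is not bilinear; the proof uses the natural symmetric interpretation consistent with the functional $J(v)$ of (\ref{EQ:functional}) and with the use of $s_h$ in the Euler--Lagrange system (\ref{EQ:PD-DG:001}). Beyond that typographical matter, both inequalities are immediate consequences of Cauchy--Schwarz and the definition of the triple norm.
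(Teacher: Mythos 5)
Your proof is correct and follows essentially the same route as the paper's: Cauchy--Schwarz for the positive semi-definite form $s_h$ (the paper carries it out sum by sum, arriving at the same bound $s_h(u,u)^{1/2}s_h(v,v)^{1/2}$), and insertion of $Q_{k-2}$ via $Q_{k-2}q=q$ followed by Cauchy--Schwarz for $b_h$. Your remark about the typographical asymmetry in (\ref{bilinearform}) is apt --- the paper's own proof indeed uses the symmetric reading $\langle\jump{\nabla u},\jump{\nabla v}\rangle_e$.
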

\begin{proof}
It follows from the definition of $s_h(\cdot,\cdot)$, $\3bar \cdot
\3bar$   and Cauchy-Schwarz inequality that  for any $u, v\in X_h$,
we have
\begin{equation*}
\begin{split}
  |s_h(u,v)|   = & \Big| \sum_{e\in \E_h} h_T^{-3} \langle \jump{u}, \jump{v}
\rangle_e + \sum_{e\in \E_h^0} h_T^{-1} \langle  \jump{\nabla u}, \jump{\nabla v} \rangle_e\Big|\\
 \leq & \Big( \sum_{e\in \E_h} h_T^{-3} \langle \jump{u}, \jump{u}
\rangle_e \Big) ^{\frac{1}{2}}  \Big( \sum_{e\in \E_h} h_T^{-3} \langle \jump{v}, \jump{v}
\rangle_e \Big) ^{\frac{1}{2}}\\ &+ \Big( \sum_{e\in \E_h^0} h_T^{-1} \langle \jump{ \nabla u}, \jump{\nabla u}
\rangle_e \Big) ^{\frac{1}{2}}  \Big( \sum_{e\in \E_h^0} h_T^{-1} \langle \jump{\nabla v}, \jump{\nabla v}
\rangle_e \Big) ^{\frac{1}{2}}\\
 \leq  & s_h(u, u)^{\frac{1}{2}} s_h(v, v)^{\frac{1}{2}} \\
 \le  & \3bar
u\3bar \3bar v\3bar.
\end{split}
\end{equation*}

Next from the definition of $b_h(\cdot,\cdot)$, $\3bar \cdot \3bar$,
and Cauchy-Schwarz inequality that for any $v\in X_h$, $q\in M_h$,
we have
\begin{equation*}
 \begin{split}
|b_h(v,q)| = &\Big|\sum_{T\in\T_h} (\sum_{i,j=1}^2
  a_{ij}
\partial_{ij}^2 v+cv, q)_T\Big|\\
=&\Big|\sum_{T\in\T_h}  ( 
Q_{k-2}(\sum_{i,j=1}^2a_{ij}
\partial_{ij}^2 v+cv), q)_T\Big|\\
\leq & (\sum_{T\in\T_h}\|Q_{k-2}(\sum_{i,j=1}^2a_{ij}
\partial_{ij}^2 v+cv)\|_T^2)^{\frac{1}{2}}(\sum_{T\in\T_h}  \|q
\|_T^2)^{\frac{1}{2}}\\
 \leq & \3bar v\3bar \|q\|.
 \end{split}
\end{equation*}
These complete the proof.
\end{proof}

Define  the subspace of $X_h$ as follows:
\begin{equation*}
\Xi_h =\{ {  v}\in X_h: \quad b_h(v, q)=0, \quad \forall { q}\in M_h \}.
\end{equation*}

\begin{lemma}\label{coerlemm}
(Coercivity)  
There exists a constant $ \alpha$, such that
$$
s_h(v, v)\geq  \alpha\3bar v\3bar ^2, \qquad \forall v\in
\Xi_h.
$$
\end{lemma}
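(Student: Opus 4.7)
The plan is to exploit the constraint defining $\Xi_h$ by choosing a very specific test function $q \in M_h$ that forces the first term in the triple-bar norm to vanish identically, reducing the inequality to an equality with $\alpha = 1$.

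Concretely, fix $v \in \Xi_h$ and set
$$q_v := Q_{k-2}\Bigl(\sum_{i,j=1}^{2} a_{ij}\partial_{ij}^2 v + cv\Bigr).$$
Since $Q_{k-2}$ is the $L^2$ projection onto $M_h$, we have $q_v \in M_h$, so $q_v$ is an admissible test function in the defining constraint of $\Xi_h$. I would then evaluate $b_h(v,q_v)$ by using the defining property of the $L^2$ projection elementwise: because $q_v|_T \in P_{k-2}(T)$,
$$\Bigl(\sum_{i,j=1}^{2} a_{ij}\partial_{ij}^2 v + cv,\ q_v\Bigr)_T = \Bigl(Q_{k-2}\bigl(\sum_{i,j=1}^{2} a_{ij}\partial_{ij}^2 v + cv\bigr),\ q_v\Bigr)_T = \|q_v\|_T^2.$$
Summing over $T \in \mathcal{T}_h$ gives $b_h(v,q_v) = \sum_{T\in\mathcal{T}_h}\|q_v\|_T^2$.

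Now invoke $v \in \Xi_h$: by definition $b_h(v,q_v)=0$, so $\sum_T \|q_v\|_T^2 = 0$ and hence $q_v \equiv 0$ on every element. This means the first contribution in the definition (\ref{newnorm}) vanishes, leaving
$$\3bar v \3bar^2 \;=\; \sum_{T\in\mathcal{T}_h}\Bigl\|Q_{k-2}\bigl(\sum_{i,j=1}^{2} a_{ij}\partial_{ij}^2 v + cv\bigr)\Bigr\|_T^2 + s_h(v,v) \;=\; s_h(v,v),$$
which proves the claim with $\alpha = 1$.

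There is essentially no obstacle here: the argument is a single-line application of the constraint to a well-chosen test function, and it works precisely because the projection $Q_{k-2}$ appearing in the triple-bar norm lands in $M_h$, the same space against which $b_h(v,\cdot)$ is required to vanish. No smallness assumption on $h$ or on the coefficients is required for this step; all of those hypotheses were consumed in Lemmas \ref{WWideas} and \ref{infsup2}.
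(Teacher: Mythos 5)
Your proposal is correct and is essentially identical to the paper's own proof: both test the constraint with $q=Q_{k-2}\bigl(\sum_{i,j=1}^{2}a_{ij}\partial_{ij}^2 v+cv\bigr)$, conclude that this projection vanishes on every element, and obtain $\3bar v\3bar^2=s_h(v,v)$ with $\alpha=1$.
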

\begin{proof}
For any $v\in  \Xi_h$, we   have
$$
b_h(v, q)=0,\qquad \forall q\in M_h.
$$
It follows from the definition of $b(\cdot,\cdot)$ in
(\ref{bilinearform2})  that
$$
0=b_h(v, q)=\sum_{T\in {\cal T}_h}( \sum_{i,j=1}^2a_{ij}
\partial_{ij}^2 v+cv,q)_T= \sum_{T\in {\cal T}_h}( Q_{k-2}(\sum_{i,j=1}^2a_{ij}
\partial_{ij}^2 v+cv),q)_T,
$$
which yields
$$
Q_{k-2}(\sum_{i,j=1}^2a_{ij}
\partial_{ij}^2 v+cv)  =0,
$$
on each $T\in {\cal T}_h$ by letting
$q=Q_{k-2}(\sum_{i,j=1}^2a_{ij}
\partial_{ij}^2 v+cv)$. This implies $s_h(v,v)=\3bar
v\3bar^2$, which completes the proof with $\alpha=1$.
\end{proof}

Using the abstract theory for the saddle-point problem developed by Babuska \cite{B73}
and Brezzi \cite{B74}, we arrive at the following theorem based on  
  Lemmas \ref{infsup2} - \ref{coerlemm}.

 \begin{theorem}\label{main2} 
 The primal-dual discontinuous Galerkin finite element method (\ref{EQ:PD-DG:001})-(\ref{EQ:PD-DG:002}) has a unique
solution $(u_h; \lambda_h)\in X_h\times M_h$, provided that the meshsize $h<h_0$ holds true for a sufficiently small but fixed parameter $h_0>0$.  Moreover, there exists a constant C such that the
solution $(u_h; \lambda_h)$ satisfies 
\begin{equation}\label{bound-jw} \3bar u_h\3bar +\|\lambda_h\|  \le C\|f\| .
\end{equation}
\end{theorem}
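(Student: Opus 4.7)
The plan is to invoke the classical Babuska--Brezzi theory for saddle-point problems, whose three structural hypotheses I already have from the preceding lemmas: boundedness of $s_h$ and $b_h$ from Lemma \ref{boundlem}, coercivity of $s_h$ on the kernel $\Xi_h$ from Lemma \ref{coerlemm}, and the inf-sup condition for $b_h$ from Lemma \ref{infsup2}. Since $X_h$ and $M_h$ are finite-dimensional, existence will follow from uniqueness by rank--nullity, so the work is (i) proving uniqueness and (ii) deriving the quantitative bound (\ref{bound-jw}), both under the meshsize restriction $h<h_0$ inherited from Lemmas \ref{WWideas} and \ref{infsup2}.

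For uniqueness, I would set $f=0$, test (\ref{EQ:PD-DG:001}) with $v=u_h$ and (\ref{EQ:PD-DG:002}) with $q=\lambda_h$, and subtract to get $s_h(u_h,u_h)=0$. Equation (\ref{EQ:PD-DG:002}) with $f=0$ puts $u_h$ into $\Xi_h$, so Lemma \ref{coerlemm} gives $\3bar u_h\3bar=0$ and hence $u_h=0$. Then (\ref{EQ:PD-DG:001}) reduces to $b_h(v,\lambda_h)=0$ for every $v\in X_h$, and Lemma \ref{infsup2} applied at $q=\lambda_h$ produces a $v_{\lambda_h}$ that forces $\beta\|\lambda_h\|^2\le 0$, i.e. $\lambda_h=0$. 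Existence for arbitrary $f$ follows because the underlying linear map on the finite-dimensional space $X_h\times M_h$ is injective, hence surjective.

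For the stability bound, I would first upgrade Lemma \ref{infsup2} to the existence of a bounded particular solution: the pointwise selection $q\mapsto v_q$ implies surjectivity of $B\colon X_h\to M_h'$, $v\mapsto b_h(v,\cdot)$, with a right inverse of operator norm controlled by $\beta^{-1}$, yielding $w_f\in X_h$ with $b_h(w_f,q)=(f,q)$ for all $q\in M_h$ and $\3bar w_f\3bar\le C\|f\|$. By (\ref{EQ:PD-DG:002}), $u_h-w_f\in\Xi_h$; testing (\ref{EQ:PD-DG:001}) at $v=u_h-w_f$ kills the $b_h$ term and, combined with Lemmas \ref{coerlemm} and \ref{boundlem}, yields $\alpha\3bar u_h-w_f\3bar^2\le s_h(u_h-w_f,u_h-w_f)=-s_h(w_f,u_h-w_f)\le\3bar w_f\3bar\3bar u_h-w_f\3bar$, hence $\3bar u_h\3bar\le C\|f\|$. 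Finally, reapplying Lemma \ref{infsup2} to produce $v_*\in X_h$ with $b_h(v_*,\lambda_h)\ge\beta\|\lambda_h\|^2$ and $\3bar v_*\3bar\le C\|\lambda_h\|$, and using (\ref{EQ:PD-DG:001}) at $v=v_*$, gives $\beta\|\lambda_h\|^2\le|s_h(u_h,v_*)|\le C\3bar u_h\3bar\|\lambda_h\|$, so $\|\lambda_h\|\le C\|f\|$ and (\ref{bound-jw}) follows.

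The main obstacle, in my view, is the passage from the constructive form of Lemma \ref{infsup2} to the bounded particular solution $w_f$: this is the step that really uses the saddle-point machinery rather than just the individual lemmas, namely the standard finite-dimensional duality argument identifying surjectivity of $B$ with injectivity of its adjoint and quantifying the norm of the right inverse by $\beta^{-1}$. Once this is in place, the rest is routine manipulation of (\ref{EQ:PD-DG:001})--(\ref{EQ:PD-DG:002}) with boundedness, coercivity on $\Xi_h$, and one more application of the inf-sup.
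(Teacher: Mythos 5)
Your proposal is correct and follows the same route as the paper: the paper simply cites the abstract Babuska--Brezzi saddle-point theory applied to Lemmas \ref{infsup2}--\ref{coerlemm}, and your argument is precisely the standard proof of that abstract result (uniqueness via testing with $(u_h;\lambda_h)$, a bounded particular solution $w_f$ from the inf-sup condition, coercivity on $\Xi_h$ for the primal bound, and a second application of the inf-sup for $\lambda_h$), with the norm property of $\3bar\cdot\3bar$ from Lemma \ref{WWideas} supplying $u_h=0$ for $h\le h_0$. The only difference is that you unpack the black box the paper leaves as a citation; the details you supply are all sound.
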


\section{Error Estimates\label{sec:EE}}
Let $(u_h;\lambda_h)\in X_h\times M_h$
be the approximate solution of the model problem (\ref{model}) arsing from primal-dual discontinuous Galerkin finite element method  (\ref{EQ:PD-DG:001})-(\ref{EQ:PD-DG:002}). 
Note that $\lambda=0$ is the exact solution of the trival dual problem $b_h(v, \lambda)=0$ for all $v\in H^2(\Omega)$.  Define the errors functions   by
$$
e_h = u_h - P_{X_h} u,\qquad \epsilon_h = \lambda_h - P_{M_h}\lambda.
$$

\begin{lemma}
 The error functions $e_h$ and $\epsilon_h$ satisfy the following  equations:
\begin{align} \label{err1}
  s_h(e_h,v)+b_h(v,\epsilon_h)=&  -s_h(P_{X_h}u,v),\qquad \forall v\in X_h,\\
b_h(e_h,p)=&l_u(p),\qquad\quad \qquad\quad  \forall  p\in M_h,\label{err2}
\end{align}
where $l_u(p)=\sum_{T\in{\cal T}_h}\sum_{i,j=1}^2 (a_{ij}(I-P_{M_h})\partial_{ij}^2u,p)_T+\sum_{T\in{\cal T}_h} (c( I-P_{X_h})u,p)_T$.
\end{lemma}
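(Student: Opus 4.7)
The plan is to prove the two error equations by direct substitution of the definitions $e_h=u_h-P_{X_h}u$ and $\epsilon_h=\lambda_h-P_{M_h}\lambda=\lambda_h$ (using $\lambda\equiv 0$) into the bilinear forms $s_h(\cdot,\cdot)$ and $b_h(\cdot,\cdot)$, and then invoking the two scheme equations (\ref{EQ:PD-DG:001})--(\ref{EQ:PD-DG:002}) together with the commuting projection identity (\ref{LSp4}).

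For the first identity (\ref{err1}), I would write, for any $v\in X_h$,
\begin{equation*}
s_h(e_h,v)+b_h(v,\epsilon_h)=s_h(u_h,v)+b_h(v,\lambda_h)-s_h(P_{X_h}u,v),
\end{equation*}
so by the first scheme equation (\ref{EQ:PD-DG:001}) the bracketed portion vanishes and the claimed identity drops out immediately. This step is essentially bookkeeping.

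For the second identity (\ref{err2}), the main work is to compute $b_h(P_{X_h}u,p)$ and compare with $(f,p)$. I would start from $b_h(e_h,p)=b_h(u_h,p)-b_h(P_{X_h}u,p)$ and use (\ref{EQ:PD-DG:002}) to replace $b_h(u_h,p)$ by $(f,p)$. Next, I would apply the commuting property (\ref{LSp4}), namely $\partial^2_{ij}P_{X_h}u=P_{M_h}(\partial^2_{ij}u)$, to rewrite
\begin{equation*}
b_h(P_{X_h}u,p)=\sum_{T\in\mathcal{T}_h}\Bigl(\sum_{i,j=1}^2 a_{ij}P_{M_h}(\partial^2_{ij}u)+cP_{X_h}u,\,p\Bigr)_T.
\end{equation*}
Adding and subtracting $\sum_{T}\bigl(\sum_{i,j}a_{ij}\partial^2_{ij}u+cu,p\bigr)_T$ converts this into
\begin{equation*}
\sum_{T}\Bigl(\sum_{i,j}a_{ij}\partial^2_{ij}u+cu,p\Bigr)_T-\sum_{T}\sum_{i,j}\bigl(a_{ij}(I-P_{M_h})\partial^2_{ij}u,p\bigr)_T-\sum_{T}\bigl(c(I-P_{X_h})u,p\bigr)_T.
\end{equation*}
Since $u$ is the strong solution of (\ref{model}), the first term equals $(f,p)$, and the remaining terms are precisely $-l_u(p)$. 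Combining yields $b_h(e_h,p)=(f,p)-(f,p)+l_u(p)=l_u(p)$, as required.

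There is no genuine obstacle here; the one subtlety worth articulating is the role of $\lambda\equiv 0$ (which identifies $\epsilon_h$ with $\lambda_h$) and the careful bookkeeping of the consistency error: the commuting identity (\ref{LSp4}) lets us pull $P_{M_h}$ inside $\partial^2_{ij}$ only for the principal part, while the zeroth-order term $cu$ produces the $(I-P_{X_h})u$ contribution because $P_{X_h}$ and not $P_{M_h}$ was used in defining $e_h$. Tracking which projection appears on which term is the only nontrivial point in an otherwise purely algebraic derivation.
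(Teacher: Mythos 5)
Your proposal is correct and follows essentially the same route as the paper: identity (\ref{err1}) by direct substitution into the first scheme equation, and identity (\ref{err2}) by replacing $b_h(u_h,p)$ with $(f,p)$ via (\ref{EQ:PD-DG:002}), invoking the commuting property (\ref{LSp4}) on the principal part, and adding and subtracting $\sum_T(\sum_{i,j}a_{ij}\partial^2_{ij}u+cu,p)_T$ to expose $l_u(p)$. The sign bookkeeping and the observation that $\epsilon_h=\lambda_h$ because $\lambda\equiv 0$ match the paper's argument exactly.
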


\begin{proof}
 By subtracting $s_h(P_{X_h}u,v)$ from both sides of (\ref{EQ:PD-DG:001}), we obtain
$$
s_h(u_h - P_{X_h} u,v)+b_h(v, \lambda_h-0)= -s_h(P_{X_h}u,v),\qquad \forall v\in X_h,
$$
which completes the proof of (\ref{err1}). 

Substracting $b_h(P_{X_h}u,p)$ from both sides of (\ref{EQ:PD-DG:002}), it follows from (\ref{LSp4}) and (\ref{model}) that
\begin{equation*}
 \begin{split}
&  b_h(u_h,p)-b_h(P_{X_h}u,p)\\
  =&(f,p)-b_h(P_{X_h}u,p)\\
=&(f,p)-\sum_{T\in{\cal T}_h} \sum_{i,j=1}^2(a_{ij} \partial_{ij}^2 (P_{X_h}u)+cP_{X_h}u,p)_T\\
=&( f,p)-\sum_{T\in{\cal T}_h} \sum_{i,j=1}^2(a_{ij} P_{M_h}(\partial_{ij}^2 u)+cP_{X_h}u,p)_T\\
 =&(f,p)- \sum_{T\in{\cal T}_h}  (\sum_{i,j=1}^2 a_{ij}\partial_{ij}^2  u+cu,p)_T-\sum_{T\in{\cal T}_h} \sum_{i,j=1}^2(a_{ij}( P_{M_h}-I)\partial_{ij}^2  u,p)_T-\sum_{T\in{\cal T}_h} (c( P_{X_h}-I)u,p)_T\\
=&( f,p)-  ( f,p)-\sum_{T\in{\cal T}_h} \sum_{i,j=1}^2(a_{ij}( P_{M_h}-I)\partial_{ij}^2  u,p)_T-\sum_{T\in{\cal T}_h} (c( P_{X_h}-I)u,p)_T\\
= &\sum_{T\in{\cal T}_h} \sum_{i,j=1}^2(a_{ij}( I-P_{M_h} )\partial_{ij}^2  u,p)_T+\sum_{T\in{\cal T}_h} (c( I-P_{X_h})u,p)_T,\\
\end{split}
\end{equation*}
which completes the proof of (\ref{err2}). 
\end{proof}

The equations (\ref{err1}) and (\ref{err2}) are called error equations for the primal-dual discontinuous Galerkin finite element scheme. 
This is a saddle point system for which Brezzi's Theorem 
can be employed for the analysis of stability.
\begin{lemma}\cite{BS94, wy3655}
 Let ${\cal T}_h$ be a finite element  partition of $\Omega$ satisfying the shape regular assumption given in \cite{BS94, wy3655}. Then, for any $0\leq s \leq 2$ and $1\leq m \leq k$, one 
has
\begin{equation}\label{errorXh}
 \sum_{T\in {\cal T}_h}h_T^{2s}\|u-P_{X_h}u\|^2_{s,T}\leq C h^{2(m+1)}\|u\|^2_{m+1},
\end{equation}
\begin{equation}\label{errorMh}
 \sum_{T\in {\cal T}_h}h_T^{2s}\|u-P_{M_h}u\|^2_{s,T}\leq C h^{2(m-1)}\|u\|^2_{m-1}.
\end{equation}
\end{lemma}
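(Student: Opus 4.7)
The plan is to prove both bounds element-by-element using the classical Bramble--Hilbert estimates for the averaged Taylor polynomial, then sum over $T\in\mathcal{T}_h$ and use shape regularity to replace the local $h_T$ by the global $h$ on the right-hand side.

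First I would fix $T\in\mathcal{T}_h$ and recall that $P_{X_h}u|_T=F_{k,B_T}(u)$ is the averaged Taylor polynomial of degree $k$ over the largest inscribed disk $B_T$. The foundational property of $F_{k,B_T}$ (stated, e.g., as Proposition 4.1.17 and Lemma 4.3.8 of \cite{BS94} or in Chapter~1 of \cite{LS07}) is that it reproduces polynomials of degree $\le k$ and is a bounded operator from $H^{m+1}(T)$ into $H^{s}(T)$ under the shape-regularity assumption. The Bramble--Hilbert lemma then yields, for every integer $m$ with $1\le m\le k$ and every $0\le s\le 2\le m+1$,
\begin{equation*}
\|u-F_{k,B_T}(u)\|_{s,T}\le C\,h_T^{m+1-s}\,|u|_{m+1,T},
\end{equation*}
where $C$ depends only on the shape parameter $\theta_{\mathcal{T}_h}$. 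This is the only nontrivial input; after that the result is just algebra.

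Next I would square both sides, multiply by $h_T^{2s}$, and sum over $T\in\mathcal{T}_h$:
\begin{equation*}
\sum_{T\in\mathcal{T}_h} h_T^{2s}\|u-P_{X_h}u\|_{s,T}^2 \le C\sum_{T\in\mathcal{T}_h} h_T^{2(m+1)}|u|_{m+1,T}^2 \le C h^{2(m+1)}\sum_{T\in\mathcal{T}_h}|u|_{m+1,T}^2 = C h^{2(m+1)}|u|_{m+1}^2,
\end{equation*}
using $h_T\le h$ and the fact that the broken $H^{m+1}$ semi-norm coincides with the global one since $u\in H^{m+1}(\Omega)$. Bounding $|u|_{m+1}$ by $\|u\|_{m+1}$ gives \eqref{errorXh}.

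For \eqref{errorMh} I would repeat the argument verbatim with $P_{M_h}u|_T=F_{k-2,B_T}(u)$, which reproduces polynomials of degree $\le k-2$. The Bramble--Hilbert step then produces $h_T^{m-1-s}|u|_{m-1,T}$ instead, and the same summation yields the factor $h^{2(m-1)}\|u\|_{m-1}^2$. I do not foresee any serious obstacle: the only place where care is needed is to confirm that the hypotheses of the Bramble--Hilbert lemma (shape regularity, and the fact that $B_T$ is a ball of radius comparable to $h_T$) are met, which is exactly the standing assumption on $\mathcal{T}_h$ recalled earlier in the paper. Everything else is routine summation.
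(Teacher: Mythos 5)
Your argument is correct and is exactly the standard Bramble--Hilbert proof that the paper delegates to the cited references \cite{BS94, wy3655}; the paper itself supplies no proof beyond that citation. The only point to watch is that the Bramble--Hilbert step for (\ref{errorMh}) implicitly requires $s\le m-1$ (for $s>m-1$ the local bound $\|u-F_{k-2,B_T}u\|_{s,T}\le Ch_T^{m-1-s}|u|_{m-1,T}$ is not available, since an $H^s$ seminorm of $u$ cannot be controlled by a lower-order seminorm), a restriction the lemma statement leaves tacit but which holds in every instance where the paper actually invokes these estimates.
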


\begin{theorem}
 Assume that the coefficient tensor $a(x)=\{a_{ij}(x)\}_{2 \times 2}$ and $c(x)$ are uniformly piecewise continuous in $\Omega$ with respect to the finite element partition
${\cal T}_h$. Let $u$ and $(u_h;\lambda_h)\in X_h\times M_h$ be the solutions of (\ref{model}) and (\ref{EQ:PD-DG:001}) - (\ref{EQ:PD-DG:002}), respectively. Assume that the exact solution $u$ of
(\ref{model}) is sufficiently regular such that $u\in H^{k+1}(\Omega)$. There exists a constant $C$ such that
\begin{equation*}
 \3bar u_h-P_{X_h} u\3bar+\|\lambda_h-P_{M_h}\lambda\| \leq Ch^{k-1}\|u\|_{k+1},
\end{equation*}
 provided that the meshsize $h<h_0$ holds true for a sufficiently small, but fixed $h_0>0$.
\end{theorem}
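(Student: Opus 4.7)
The plan is to apply the abstract Brezzi saddle-point error analysis to the error equations (\ref{err1})-(\ref{err2}), using the three ingredients already established: boundedness of $s_h$ and $b_h$ (Lemma \ref{boundlem}), coercivity of $s_h$ on $\Xi_h$ (Lemma \ref{coerlemm}), and the inf-sup condition on $b_h$ (Lemma \ref{infsup2}). The two quantities driving the right-hand side will be the functional $l_u(\cdot)$ on $M_h$ and the linear form $s_h(P_{X_h}u,\cdot)$ on $X_h$; both must be shown to be of size $Ch^{k-1}\|u\|_{k+1}$.

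First I would bound $\|l_u\|_{M_h^*}$. Applying Cauchy--Schwarz elementwise and using the $L^\infty$-boundedness of $a_{ij}$ and $c$, the key step is to invoke the approximation estimate (\ref{errorMh}) with $m=k$ on $\partial_{ij}^2 u\in H^{k-1}(\Omega)$ to obtain $\sum_T \|(I-P_{M_h})\partial_{ij}^2 u\|_T^2 \le Ch^{2(k-1)}\|u\|_{k+1}^2$, and (\ref{errorXh}) with $m=k$ on $u$ to obtain a strictly higher-order bound for the $c(I-P_{X_h})u$ piece. This yields $|l_u(p)| \le Ch^{k-1}\|u\|_{k+1}\|p\|$. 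Next I would bound $s_h(P_{X_h}u,v)$. Because $u\in H^{k+1}(\Omega)\subset C^1(\Omega)$ with vanishing trace, one has $\jump{u}=0$ and $\jump{\nabla u}=0$ on all (interior) edges, so the jumps of $P_{X_h}u$ equal jumps of $P_{X_h}u-u$. Combining the trace inequality (\ref{tracein}) with (\ref{errorXh}) (with $s=0,1$ and $m=k$), exactly mirroring the calculation leading to (\ref{EQ:Sept:16:003}), gives $s_h(P_{X_h}u,P_{X_h}u)^{1/2}\le Ch^{k-1}\|u\|_{k+1}$, and Cauchy--Schwarz on $s_h$ then gives $|s_h(P_{X_h}u,v)|\le Ch^{k-1}\|u\|_{k+1}\3bar v\3bar$.

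With these two bounds in hand I would run the standard Brezzi argument. By the inf-sup condition (Lemma \ref{infsup2}), pick $v^*\in X_h$ satisfying $b_h(v^*,p)=l_u(p)$ for all $p\in M_h$ with $\3bar v^*\3bar \le C\|l_u\|_{M_h^*}/\beta \le Ch^{k-1}\|u\|_{k+1}$. Then $e_h^0:=e_h-v^*$ lies in $\Xi_h$. Testing (\ref{err1}) with $v=e_h^0$, the $b_h$ term vanishes and coercivity gives
\begin{equation*}
\alpha\3bar e_h^0\3bar^2 \le s_h(e_h^0,e_h^0) = -s_h(P_{X_h}u,e_h^0) - s_h(v^*,e_h^0) \le Ch^{k-1}\|u\|_{k+1}\3bar e_h^0\3bar,
\end{equation*}
hence $\3bar e_h^0\3bar\le Ch^{k-1}\|u\|_{k+1}$, and by the triangle inequality $\3bar e_h\3bar\le Ch^{k-1}\|u\|_{k+1}$. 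For $\epsilon_h$, I would use the inf-sup condition again: pick $v_*\in X_h$ with $b_h(v_*,\epsilon_h)\ge\beta\|\epsilon_h\|^2$ and $\3bar v_*\3bar\le C\|\epsilon_h\|$, then from (\ref{err1}) with $v=v_*$,
\begin{equation*}
\beta\|\epsilon_h\|^2 \le |s_h(e_h,v_*)| + |s_h(P_{X_h}u,v_*)| \le (\3bar e_h\3bar + Ch^{k-1}\|u\|_{k+1})\3bar v_*\3bar,
\end{equation*}
so $\|\epsilon_h\|\le Ch^{k-1}\|u\|_{k+1}$.

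The main obstacle I expect is the estimate of $s_h(P_{X_h}u,P_{X_h}u)$: it requires tracking the exact interplay between the powers of $h_T$ in the $s_h$ weights ($h_T^{-3}$ and $h_T^{-1}$) and the approximation orders from (\ref{errorXh}) applied at $s=0,1$, to confirm that both the jump and gradient-jump contributions scale as $h^{2(k-1)}\|u\|_{k+1}^2$ rather than something worse. The $l_u$ bound is more straightforward once one notices that applying (\ref{errorMh}) to second derivatives automatically costs two orders relative to applying it to $u$ itself, which is precisely what produces the $h^{k-1}$ rate, and explains why the dual variable cannot be shown to converge faster by this argument.
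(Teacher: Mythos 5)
Your proposal is correct and follows essentially the same route as the paper: the paper likewise reduces everything to the two bounds $\sup_v |s_h(P_{X_h}u,v)|/\3bar v\3bar \le Ch^{k-1}\|u\|_{k+1}$ (via $\jump{u}=\jump{\nabla u}=0$, the trace inequality (\ref{tracein}), and (\ref{errorXh})) and $\sup_p |l_u(p)|/\|p\| \le Ch^{k-1}\|u\|_{k+1}$ (via (\ref{errorMh}) and (\ref{errorXh})), and then invokes the abstract Brezzi stability estimate, which you simply unfold by hand using Lemmas \ref{infsup2}--\ref{coerlemm}.
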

\begin{proof}
 It follows from  Lemmas \ref{infsup2} - \ref{coerlemm} that the Brezzi's stability conditions
 are satisfied for the saddle point problem (\ref{err1})-(\ref{err2}). Thus, there exists a constant $C$ such that
\begin{equation}\label{sup3}
 \3bar e_h\3bar+\|\epsilon_h\| \leq C\Bigg(\sup_{v\in{X_h},v\neq 0} \frac{ |-s_h(P_{X_h}u,v)|}{\3bar v\3bar}+
\sup_{p\in M_h,p\neq 0} \frac{|l_u(p)|}{\|p\|}\Bigg).
\end{equation}

Recall that
\begin{equation}\label{sup1}
 \begin{split}
 &\sup_{v\in{X_h},v\neq 0} \frac{ |-s_h(P_{X_h}u,v)|}{\3bar v\3bar} \\
\leq & \sup_{v\in{X_h},v\neq 0} \frac{ | \sum_{e\in \E_h} h_T^{-3} \langle \jump{P_{X_h}u}, \jump{v}
\rangle_e|+| \sum_{e\in \E_h^0} h_T^{-1} \langle \jump{\nabla P_{X_h}u}, \jump{\nabla v}
\rangle_e|}{\3bar v\3bar}\\
 \end{split}
\end{equation}
As to the first term of the right-hand side of (\ref{sup1}), from Cauchy-Schwarz inequality, trace inequality (\ref{tracein}) and (\ref{errorXh}),   we have
\begin{equation}\label{ter2}
 \begin{split}
|\sum_{e\in \E_h} h_T^{-3} &\langle \jump{P_{X_h}u}, \jump{v}
\rangle_e |\leq  C\Big( \sum_{e\in  \E_h}h_T^{-3} \|\jump{P_{X_h}u}\|^2_e\Big)^{\frac{1}{2}}\Big( \sum_{e\in  \E_h}h_T^{-3} \|\jump{v}\|^2_e\Big)^{\frac{1}{2}}
\\ 
 \leq &  C\Big( \sum_{e\in  \E_h}h_T^{-3} (\|\jump{P_{X_h}u}-\jump{ u}\|^2_e+ \|\jump{ u}\|^2_e)\Big)^{\frac{1}{2}}\3bar v\3bar
 \\
\leq & C\Big( \sum_{T\in  {\cal T}_h}h_T^{-4}  \|\jump{P_{X_h}u -  u} \|^2_T +h_T^{-2}  \|\jump{P_{X_h}u -  u}\|^2_{1,T} 
\Big)^{\frac{1}{2}}\3bar v\3bar\\
  \leq & C h^{k-1}\|u\|_{k+1}\3bar v\3bar,
 \end{split}
\end{equation}
where we used $\jump{ u}=0$ as $u\in H^2(\Omega)\cap
H_0^1(\Omega)$.
Similarly, we have
\begin{equation}\label{ter3} 
|\sum_{e\in \E_h^0} h_T^{-1} \langle \jump{\nabla P_{X_h}u}, \jump{\nabla v}
\rangle_e  | \leq   C h^{k-1}\|u\|_{k+1}\3bar v\3bar.
\end{equation} 
Substituting  (\ref{ter2})-(\ref{ter3})  into (\ref{sup1}), we have
\begin{equation}\label{sup4}
\sup_{v\in{X_h},v\neq 0} \frac{| -s_h(P_{X_h}u,v)|}{\3bar v\3bar} \leq   C h^{k-1}\|u\|_{k+1}.
\end{equation}

From Cauchy-Schwarz inequality and (\ref{errorMh}), we obtain
\begin{equation}\label{sup2}
 \begin{split}
&\qquad \sup_{p\in M_h,p\neq 0} \frac{|l_u(p)|}{\|p\|}\\
= &\sup_{p\in M_h,p\neq 0} \frac{\Big|\sum_{T\in{\cal T}_h}\sum_{i,j=1}^2 (a_{ij}(I-P_{M_h})\partial_{ij}^2u,p)_T\Big|}{\|p\|}+  \sup_{p\in M_h,p\neq 0} \frac{\Big| \sum_{T\in{\cal T}_h} (c( I-P_{X_h})u,p)_T\Big|}{\|p\|}\\
 \leq &\sup_{p\in M_h,p\neq 0} \frac{\Big|\|a_{ij}\|_{L^\infty(\Omega)}\Big(\sum_{T\in{\cal T}_h}\sum_{i,j=1}^2 \|(I-P_{M_h})\partial_{ij}^2u\|^2_T\Big)^{\frac{1}{2}}
\Big(\sum_{T\in{\cal T}_h}\|p\|^2_T\Big)^{\frac{1}{2}}\Big|}{\|p\|}\\
&+ \sup_{p\in M_h,p\neq 0} \frac{\Big|\|c\|_{L^\infty(\Omega)}\Big(\sum_{T\in{\cal T}_h} \|(I-P_{X_h}) u\|^2_T\Big)^{\frac{1}{2}}
\Big(\sum_{T\in{\cal T}_h}\|p\|^2_T\Big)^{\frac{1}{2}}\Big|}{\|p\|}\\
&\leq    C h^{k-1}\|u\|_{k+1}+Ch^{k+1}\|u\|_{k+1}\\
&\leq    C h^{k-1}\|u\|_{k+1}.
 \end{split}
\end{equation}

Substituting  (\ref{sup4}) and (\ref{sup2})  into (\ref{sup3}) completes the proof. 
\end{proof}

\section{Bivariate Spline Implementation of Algorithm~\ref
{ALG:primal-dual-dg-fem}}
We notice that $X_h$ is a discontinuous spline space of degree $k$ over 
a finite element partition ${\cal T}_h$ and $M_h$ is a 
discontinuous spline space of degree $k_1$, e.g. $k_1=k-2$ over 
${\cal T}_h$.  When ${\cal T}_h$ is a triangulation, these are spline 
spaces which have  been thoroughly studied in 
\cite{ALW06} and \cite{LS07}.  
In this paper,  let us explain how to use these spline functions 
for numerical solution of the second order elliptic PDE (\ref{model}).  
 When ${\cal T}_h$ is a triangulation, 
spline functions use the Bernstein-B\'ezier representation 
as explained in \cite{LS07}. That is, 
the prime-dual discontinuous Galerkin FEM method discussed in the
previous sections can be reformulated by using 
the Bernstein-B\'ezier representation. 
The representation has several  nice properties (cf. \cite{LS07}): 
(1) the basis functions form a partition of unity, (2) the basis 
functions are nonnegative, 
and (3) the basis functions have explicit formulas for their derivatives, 
integration, their inner product, and triple product integration. 
    
In the remaining of the paper, we use  
both $u\in X_h$ and its coefficient vector ${\bf u}$ in terms 
of Bernstein-B\'ezier representation to write a discontinuous spline 
function $u$. Similarly, we use both  $q\in M_h$  and its  coefficient 
vector  ${\bf q}$.
Most importantly, for any function $u\in X_h$, $u$ is a piecewise 
polynomial function of degree $k$ over
${\cal T}_h$,  the jump function $\jump{u}$ over an interior edge $e$ 
of ${\cal T}_h$  
can be rewritten by using the smoothness conditions between 
the coefficients of two polynomial pieces  $u|_{T_1}$ and $u|_{T_2}$ on 
their common edge $e$ for triangles $T_1, T_2\in {\cal T}_h$ 
which share $e$. See \cite{F86} and \cite{LS07}.  
The smoothness conditions are linear and all  these conditions over each 
interior edge can be expressed together by using $H{\bf u}=0$ as 
explained in \cite{ALW06}, 
where $H$ is a rectangular and sparse matrix and 
${\bf u}$ is the coefficient vector of $u$.  

On the boundary of $\Omega$, $u$ has to satisfy the Dirichlet boundary 
condition which can be approximated by 
using a standard polynomial interpolation method, i.e.,  
$u(\bfx)|_e=g(\bfx)$ for $k+1$ distinct points 
$\bfx \in e$, where $e$ is a boundary edge of ${\cal T}_h$. As $u$ is a 
polynomial on $e$, the interpolation 
condition $u(\bfx)|_e=g(\bfx)$ can be expressed by linear equations in 
terms of its coefficients. We put these
linear equations for all boundary edges together and express them by 
$B{\bf u}= {\bf g}$, where $B$ is a 
rectangular and sparse matrix and  ${\bf g}$ is a vector consisting of
the  values of $g$ at the $k+1$ equally-spaced points over  $e$ for all 
boundary edges $e\in \triangle$. 

The PDE equation in (\ref{weak}) can be discretized 
by using Bernstein-B\'ezier
representation as follows. We first approximate the right-hand side 
$f$ by discontinuous spline functions in $S_f\in M_h$. For example, we 
may choose $S_f$ to be the piecewise polynomial 
function which interpolates $f$ at the domain points on $T$ of degree 
$k_1$ for all triangle $T\in {\cal T}_h$, under the assumption that $f$ 
is a continuous function. For another example, we 
choose $S_f\in M_h$ such that for each triangle $T\in {\cal T}_h$, 
\begin{equation}
\label{testspace2}
\int_T f q dxdy = \int_T S_f q dxdy, \quad \forall q\in {\cal P}_{k_1},
\end{equation}
where ${\cal P}_{k_1}$ is the standard polynomial space of total degree
$k_1$. It is easy to know that the problem (\ref{testspace2}) has a 
unique solution
of $S_f|_T$. Thus, $S_f\in {\cal M}_h$ is well-defined. 
In fact, we have the following properties
\begin{equation}
\label{property1}
\|S_f\|\le \|f\| \hbox{ and } \|S_f- f\|= \min_{s\in M_h}\|s- f\|.
\end{equation}
Indeed, we have $\int_T |S_f|^2dxdy= \int_T fS_f dxdy$ for all 
$T\in {\cal T}_h$ and 
use Cauchy-Schwarz inequality to have the inequality in 
(\ref{property1}). The equality in (\ref{property1}) can be seen from 
the solution of the least squares problem in (\ref{testspace2}).

We compute the inner product integration on the right-hand of 
(\ref{weak}) exactly by using Theorem 2.34 
in \cite{LS07} and a triple inner product formula. That is, we have
$$
\int_\Omega f qdxdy = \int_\Omega S_f qdxdy =\langle M{\bf f}, 
{\bf q}\rangle, 
$$
where ${\bf f}$ is the coefficient vector of $S_f$,  $M$ is called 
the mass matrix which is a blockly diagonal matrix and ${\bf q}$ is the 
coefficient vector of $q$. 
 
Similarly, we approximate the coefficients $a_{ij}$ 
by discontinuous spline functions in another discontinuous spline 
space $S_{ij}\in L_h=S^{-1}_{1}({\cal T}_h)$ of degree $1$, 
say piecewise linear interpolation of $a_{ij}$. 
\begin{equation}
\label{testspace1}
 \int_T a_{ij} \partial^2_{ij} u q dxdy \approx \int_T S_{i,j} 
 \partial^2_{ij} u q dxdy, \quad \forall u\in {\cal P}_{k}, q\in 
{\cal P}_{k-2}.
\end{equation}  
 Once we have $S_{ij}$, we compute triple product 
integration on the left-hand side of (\ref{weak}). That is, 
$\int_T S_{ij}\partial_{ij}^2 u qdxdy$
has an exact formula in terms of the coefficients of $S_{ij}, u,$ 
and  $q$. 
Thus we have
$$
\int_\Omega \sum_{i,j=1}^2 a_{ij} \partial^2_{ij} u q dxdy \approx
\int_\Omega \sum_{i,j=1}^2 S_{ij} 
\partial^2_{ij} u q dxdy =\langle K {\bf u}, {\bf q}\rangle,  
$$
where $K$ is the stiffness matrix related to the PDE (\ref{model}). 

In order to have an equality in the above formula, we now use the 
standard $L^2$ projection $P_{M_h}$ which is defined by $P_{M_h}(v)
\in M_h$ such that 
\begin{equation}
\label{L2projection}
\langle P_{M_h}(v), q\rangle = \langle v, q\rangle, \forall q\in M_h.
\end{equation}
Thus, we have 
$$
\int_\Omega \sum_{i,j=1}^2 a_{ij} \partial^2_{ij} u q dxdy =\langle P(\sum_{i,j=1}^2 
a_{ij} \partial^2_{ij} u), q\rangle 
=\int_\Omega \sum_{i,j=1}^2 P_{M_h}(a_{ij} \partial^2_{ij}u) qdxdy. 
$$
Since the the projection is linear, we can write 
$$
\int_\Omega \sum_{i,j=1}^2 
P_{M_h}(a_{ij} \partial^2_{ij}u) qdxdy =\langle K{\bf u}, {\bf 
q}\rangle, 
$$
for a blockly diagonal matrix $K$ and for all $q\in M_h$. 
In this way,  we obtain a  discretized PDE equation: 
$\langle K {\bf u}, {\bf q}\rangle =  \langle M {\bf f},{\bf q}\rangle$ 
for all ${\bf q}\in \mathbb{R}^{d({\cal M}_h)}$ or a linear system:
\begin{equation}
\label{discretePDE}
 K {\bf u} =  M{\bf f}.
\end{equation}
Note that both $M$ and $K$ can be computed in parallel.

In terms of the Berstein-B\'ezier representation, the bilinear forms 
in (\ref{bilinearform}) and 
(\ref{bilinearform2}) can be rewritten as 
\begin{equation}
\label{newbilinearform}
s(u, v) =h^2\langle H {\bf u}, H{\bf v}\rangle + h^2\langle B {\bf u},
B{\bf v}\rangle, \quad \forall u, v\in X_h,
\end{equation}
and 
\begin{equation}
\label{newbilinearform2}
b(u, q)= \langle K{\bf u}, {\bf q}\rangle, \quad \forall u\in X_h, q\in M_h.
\end{equation}
With the above preparation, Algorithm~\ref{ALG:primal-dual-dg-fem} 
can be recast as follows. 

Let us consider the following minimization problem for (\ref{weak}): 
Find ${\bf u}$ satisfying
\begin{equation}
\label{minH-ww} \min  \frac{h^2}{2}(\|H{\bf
u}\|^2 + \|B{\bf u}-{\bf g}\|^2),  \quad \hbox{subject to }
K{\bf u}=M {\bf f}.
\end{equation}
Note that the boundary condition is imposed by minimizing
the error in an least-squares sense so that the boundary conditions do 
not need to be strictly enforced.

This minimization problem (\ref{minH-ww}) can be reformulated by using 
Lagrange multiplier method as follows: let 
\begin{equation}
\label{Lagfun-ww} L({\bf u}, \lambda)
= \frac{h^2}{2}(\|H {\bf u}\|^2+
\|B{\bf u}-{\bf g}\|^2) + {\bf \lambda}^\top (K{\bf u}- M{\bf f}),
\end{equation}
where ${\bf \lambda}$ is a Lagrange multiplier. Thus,  the minimizer 
${\bf u}^*$ of (\ref{minH-ww}) satisfies (\ref{minH2-ww}). Hence, we have
\begin{algorithm}\emph{(The Primal-Dual Bivariate Spline Method)} 
\label{newalg}
Find a vector pair $({\bf u}^*, {\bf \lambda}^*)
\in  \mathbb{R}^{d(X_h)}\times \mathbb{R}^{d(M_h)}$ satisfying 
\begin{equation}
\label{minH2-ww}
\begin{cases}
h^2\langle H{\bf u}^*, H{\bf d}\rangle + h^2\langle B{\bf
u}, B{\bf d}\rangle+ \langle \lambda^*, K {\bf d}\rangle &= h^2\langle
{\bf g}, B{\bf d}\rangle, \qquad \forall {\bf d} \in 
\mathbb{R}^{d(X_h)}, \cr
\langle {\bf q} , K {\bf u}^*\rangle &= \langle {\bf q}, 
M {\bf f}\rangle, \qquad \forall {\bf q} \in \mathbb{R}^{d(M_h)}, 
\end{cases}
\end{equation} 
where $d(X_h)$ is the dimension of $X_h$ and $d(M_h)$ is the dimension 
of $M_h$. In fact, 
$d(X_h)= (k+1)(k+2) N({\cal T}_h)/2$ 
and $d(M_h)= (k_1+1)(k_1+2) N({\cal T}_h)/2$ 
with $N({\cal T}_h)$ being the number of triangles in ${\cal T}_h$.
We shall denote by $u_h\in X_h$ the spline solution with coefficient 
vector ${\bf u}^*$ and similarly, 
$\lambda_h\in M_h$ with coefficient vector $\lambda^*$.  
\end{algorithm}

This Algorithm~\ref{newalg} will be implemented and  numerically 
experimented in this paper. 

\section{Numerical Results based on Minimization (\ref{minH-ww})
\label{sec:nr}}
We have implemented  Algorithm~\ref{newalg} 
in MATLAB based on the spline function 
implementation method discussed in \cite{ALW06} which is completely different from the spline functions
implemented in \cite{S15}. Our main driving code is given below.
\begin{verbatim}
[V,T]=mytriangulation(3); %input a triangulation.
k=5; % degree of spline functions in X_h, You can use any integer d\ge 2.
k1=d-2; %degree of spline functions in M_h.
caseNum=101; %this number is the number in the test function list g2.  
%The following three lines generate 1 million points for computing the RMSE.
xmin =min(V(:,1)); xmax=max(V(:,1)); xscal=xmax-xmin; 
ymin =min(V(:,2)); ymax=max(V(:,2)); yscal=ymax-ymin;
[X,Y]=meshgrid(xmin:xscal/1000:xmax,ymin:yscal/1000:ymax);
%The exact solution: values and its derivatives.
[Exact,Exactdx,Exactdy,Exactxx,Exactxy,Exactyy]=g2(X(:),Y(:),caseNum);
L=5; %Level of refinement.  
k2=2; %degree of splines to approximate the PDE coefficients. 
for i=1:L
 [V,T]=refine(V,T); %uniform refinement of triangulation (V, T).
 [E,TE,TV,EV,B] = tdata(V,T); %triangulation relations
 H0 = smoothness(V,T,k,0); %The function value continuity conditions 
 H1 = smoothness1(V,T,k,1);%The function derivative continuity conditions
 H=[H0;H1]; %or simply use:   H=smoothness(V,T,d,1);
 M=mass(V,T,k1); %the mass matrix
[A,B,C,D,E] = bnetw2(V,T,k2,'weights4PDE2',caseNum); 
%spline approximation of PDE coefficients above and  the weighted stiff matrices
[Kxx,Kxy,Kyx,Kyy,Kc]=bending2(V,T,k,k1,k2,A,B,C,D,E); 
 K=Kxx+Kxy+Kyx+Kyy; %use K for simplicity. 
 F = bnet2(V,T,k1,'ellipticPDE',caseNum); %spline for the right-hand side of PDE
 [G,Bm] = dirchlet(V,T,TE,E,k,'g2',caseNum); %generating the boundary conditions
%The following 3 lines are parameters for constrained iterative minimization. 
 m = (k+1)*(k+2)/2; n = size(T,1);
 p1 = length(G); p2 = size(H,2); 
 tol=1.0e-16;    eps=6;    max_it=5;
%The following CImin is an algorithm explained in Appendix 1. 
    c=CImin(H'*H+Bm'*Bm,K'-Kc',Bm'*G,M*F,eps,max_it, tol); % 
% We now evaluate the spline approximation of the solution:
 s{1}=V;s{2}=T;s{3}=c; %s is a spline of coefficients c over triangulation (V,T).
%computing the derivatives of s.
 cx=xder(s); cy=yder(s);cxx=xder(cx); cxy=yder(cx); cyy=yder(cy); 
 Z = gevalP(s,X(:),Y(:)); %evaluation for checking accuracy and displaying
 Zx = gevalP(cx,X(:),Y(:)); Zy = gevalP(cy,X(:),Y(:));
 Zxx=gevalP(cxx,X(:),Y(:));Zxy=gevalP(cxy,X(:),Y(:));Zyy=gevalP(cyy,X(:),Y(:));   
% Finally we check the accuracy in the root mean square errors. 
 err=sqrt(mse(Exact(:)-Z));
 errx=sqrt(mse(Exactdx(:)-Zx)); erry=sqrt(mse(Exactdy(:)-Zy));
 errxx=sqrt(mse(Exactxx(:)-Zxx)); errxy=sqrt(mse(Exactxy(:)-Zxy));
 erryy=sqrt(mse(Exactyy(:)-Zyy);
end
\end{verbatim}


We let $S_u$ be the spline solution with the
coefficient vector ${\bf c}(u)$ which is the minimizer of
(\ref{minH-ww}) and report the root mean squared error (RMSE) of
$u-S_u$, $\nabla (u - S_u) =  (\dfrac{\partial}{\partial x}(u-S_u),
\dfrac{\partial}{\partial y}(u-S_u))$ and $\nabla^2(u-S_u)=
(\dfrac{\partial^2}{\partial x^2}(u-S_u),
\dfrac{\partial^2}{\partial x\partial y}(u-S_u),
\dfrac{\partial^2}{\partial y^2}(u-S_u))$ based on their values 
over $1001\times 1001$
equally-spaced points over $\Omega$. More precisely, we report the
RMSE of $\nabla (u - S_u)$ which is the average of the RMSE of
$\dfrac{\partial}{\partial x}(u-S_u)$ and $\dfrac{\partial}{\partial
y}(u-S_u)$. Similar for the RMSE of $\nabla^2(u-S_u)$. We shall also
present the rates of convergence of RMSE between refinement levels. 

The remaining of this section is divided into three subsections. 
In the first subsection, we present  numerical results based on 
the PDE with smooth coefficients and $c\equiv 0$. 
We also use smooth solutions to test our spline method. 
One of purposes is 
to demonstrate that our MATLAB implementation is correct and 
is able to produce excellent numerical solution. Another purpose 
is to compare with the numerical results in \cite{SS13} and \cite{WW15}.

In the next two subsections, 
we mainly present numerical results from the 
second order elliptic PDE with nonsmooth coefficients and nonsmooth  
solution.  
Our numerical experiments show that 
the higher order splines still give a better approximation than the 
lower order splines. Our numerical results for the same testing 
function used in \cite{SS13} provide more evidence than the convergence
rate for $|u- S_u|_{H^2(\Omega)}$ is $d-1$ for $d=6, 7, 8$. 

Finally we  show some spline solutions for PDE in (\ref{model}) 
with nonzero function $c$ for smooth and nonsmooth exact solutions. 
Numerical results are similar to the case when $c\equiv 0$.

\subsection{The case with smooth coefficients}
\label{ss:smooth}
In the following examples, we shall use spline spaces
$S^{-1}_d(\triangle_\ell)$ of various degrees $d=2, 3, 4, 5, 6, 7,
8...$ to solve the PDE of interest, where $\triangle_0$ is a standard 
triangulation of $\Omega$
and $\triangle_\ell$ is the uniform refinement of
$\triangle_{\ell-1}$ for $\ell=1, 2, 3, 4$. 

\begin{example}
\label{ex0} We begin with a 2nd order elliptic equation with
constant coefficients and smooth solution $u =\sin(x)\sin(y)$ 
which satisfies the following partial differential equation:
\begin{equation}
\label{PDEex1} 3\frac{\partial^2}{\partial x^2} u +
2\frac{\partial^2}{\partial x\partial y} u
+2\frac{\partial^2}{\partial y^2} u = f(x,y), \quad (x,y)\in
\Omega\subset \mathbb{R}^2,
\end{equation}
where $\Omega$ is a standard square domain $[0,1]^2$  (cf.
\cite{WW15}).  We use $X_h=S^{-1}_d(\triangle_\ell)$
and $M_h=S^{-1}_{d-2}(\triangle_\ell)$ with $h=|\triangle_\ell|$.  
We use a triangulation $\triangle_0$ which consists of
2 triangles and then uniformly refine $\triangle_0$ repeatedly 
to obtain $\triangle_\ell, \ell=1, 2, 3, 4, 5$.

\begin{table}[thbp]
\centering
\begin{tabular}{|c|c|r|c|r|c|r|}
\hline $|\triangle|$ & $u-S_u$ & rate & $\nabla(u-S_u)$ & rate &
$\nabla^2(u- S_u)$ & rate \cr\hline 
0.7071 & 2.052453e-03 & 0.00 & 1.564506e-02 & 0.00 & 1.163198e-01 & 0.00  \cr \hline 
0.3536 & 7.574788e-04 & 1.44 & 4.728042e-03 & 1.72 & 6.078911e-02 & 0.94  \cr \hline 
0.1768 & 2.779251e-04 & 1.45 & 1.397469e-03 & 1.76 & 3.022752e-02 & 1.01  \cr \hline 
0.0884 & 8.156301e-05 & 1.77 & 3.809472e-04 & 1.88 & 1.489634e-02 & 1.03  \cr \hline 
0.0442 & 2.161249e-05 & 1.92 & 9.836874e-05 & 1.95 & 7.401834e-03 & 1.01  \cr \hline  
\end{tabular}
\caption{The RMSE of spline solutions using $X_h=S^{-1}_2(\triangle_\ell)$ and 
$M_h=S^{-1}_0(\triangle_\ell)$ for $
\ell=1, 2, 3, 4, 5$ of PDE (\ref{PDEex1})\label{T0d2}}
\end{table}

Table~\ref{T0d2} may be compared with  Table 8.1 in \cite{WW15}. 
First of all, we can see that there is a superconvergence in 
$L^2$ norm approximation  in Table 8.1 in \cite{WW15}. 
That is, the convergence rate in \cite{WW15} 
is about 4 although they only use piecewise polynomials of degree 2. 
So far there is no mathematical theory to guarantee this 
superconvergence.  Note that the computation of their convergence is based on node points
of the underlying triangulation, that is, 6 points per triangle for all triangles in ${\cal T}_h$ for each $h>0$.
In our Table~\ref{T0d2}, the convergence is measured in 
the RMSE based on $1001\times 1001$ equally-spaced points over $\Omega$ 
and our convergence rate is about 2 for $M_h=S^{-1}_0(\triangle_\ell)$. 
Nevertheless, our convergence of $\nabla (u_h- u)$ is better 
than that in Table 8.1 in \cite{WW15}. Also, we are able to show 
the convergence in the second order derivatives of $u- u_h$, i.e. the 
semi-norm $|u-u_h|_{H^2(\Omega)}$.   

In the next few tables, we use $X_h=S^{-1}_k(\triangle_\ell)$ and 
$M_h=S^{-1}_{k_1}(\triangle_\ell)$ with $k_1\ge 1$. Then the order of convergence will increase.   
This is an advantage of our numerical algorithm  that we can use polynomials of
higher degree easily by simply adjusting $k$ and/or $k_1$ 
in our main driving code. For $k=3$ and $k_1=1$, we have 

\begin{table}[htbp]
\centering
\begin{tabular}{|c|c|r|c|r|c|r|}
\hline $|\triangle|$ & $u-S_u$ & rate & $\nabla(u-S_u)$ & rate &
$\nabla^2(u- S_u)$ & rate \cr\hline 
0.7071 & 1.549234e-03 & 0.00 & 5.551342e-03 & 0.00 & 2.571257e-02 & 0.00  \cr \hline 
0.3536 & 3.614335e-04 & 2.10 & 1.266889e-03 & 2.13 & 6.506533e-03 & 1.99  \cr \hline 
0.1768 & 8.995656e-05 & 2.01 & 3.098134e-04 & 2.03 & 1.627964e-03 & 2.00  \cr \hline 
0.0884 & 2.255287e-05 & 2.00 & 7.741892e-05 & 2.00 & 4.087224e-04 & 1.99  \cr \hline 
0.0442 & 5.639105e-06 & 2.00 & 1.935553e-05 & 2.00 & 1.026039e-04 & 1.99  \cr \hline 
\end{tabular}
\caption{The RMSE of spline solutions using 
$X_h=S^{-1}_3(\triangle_\ell)$ and $M_h=S^{-1}_1(\triangle_\ell)$ for $
\ell=1, 2, 3, 4, 5$ of PDE (\ref{PDEex1})\label{T0d3}}
\end{table}

To increase the convergence rates
for $u-u_h$ and $\nabla (u-u_h)$, we use $k_1=k$ which can be easily
adjusted in our main driving code. As we can see from Table~\ref{T0d33}.
The convergence and convergence rates are much better than 
Tables~\ref{T0d2} and \ref{T0d3}. 

\begin{table}[htbp]
\centering
\begin{tabular}{|c|c|r|c|r|c|r|}
\hline $|\triangle|$ & $u-S_u$ & rate & $\nabla(u-S_u)$ & rate &
$\nabla^2(u- S_u)$ & rate \cr\hline 
0.7071 & 1.544907e-04 & 0.00 & 1.004675e-03 & 0.00 & 9.443382e-03 & 0.00  \cr \hline 
0.3536 & 1.044383e-05 & 3.89 & 1.351050e-04 & 2.89 & 2.474539e-03 & 1.94  \cr \hline 
0.1768 & 8.189057e-07 & 3.67 & 1.757983e-05 & 2.94 & 6.360542e-04 & 1.97  \cr \hline 
0.0884 & 8.172475e-08 & 3.32 & 2.226705e-06 & 2.98 & 1.612220e-04 & 1.98  \cr \hline 
0.0442 & 8.968295e-09 & 3.19 & 2.803368e-07 & 2.99 & 4.053880e-05 & 1.99  \cr \hline  
\end{tabular}
\caption{The RMSE of spline solutions using 
$X_h=S^{-1}_3(\triangle_\ell)$ and 
$M_h=S^{-1}_3(\triangle_\ell)$ for $
\ell=1, 2, 3, 4, 5$ of PDE (\ref{PDEex1})\label{T0d33}}
\end{table}

Similarly, we can use $k=4$ and $k_1=4$. The numerical results are given 
in Tables~\ref{T0d44}--\ref{T0d55} and show that the convergence rate is more than $k=4$. 

\begin{table}[thbp]
\centering
\begin{tabular}{|c|c|r|c|r|c|r|}
\hline $|\triangle|$ & $u-S_u$ & rate & $\nabla(u-S_u)$ & rate &
$\nabla^2(u- S_u)$ & rate \cr\hline 
0.7071 & 7.146215e-06 & 0.00 & 8.190007e-05 & 0.00 & 1.185424e-03 & 0.00  \cr \hline 
0.3536 & 2.645725e-07 & 4.76 & 5.224157e-06 & 3.97 & 1.449168e-04 & 3.03  \cr \hline 
0.1768 & 1.316127e-08 & 4.33 & 3.160371e-07 & 4.05 & 1.685747e-05 & 3.10  \cr \hline 
0.0884 & 6.399775e-10 & 4.36 & 1.937981e-08 & 4.03 & 1.987492e-06 & 3.08  \cr \hline 
0.0442 & 2.456211e-11 & 4.70 & 1.200460e-09 & 4.01 & 2.409873e-07 & 3.04  \cr \hline 
\end{tabular}
\caption{The RMSE of spline solutions using 
$X_h=S^{-1}_4(\triangle_\ell)$ and $M_h=S^{-1}_4(\triangle_\ell)$ for $
\ell=1, 2, 3, 4, 5$ of PDE (\ref{PDEex1})\label{T0d44}}
\end{table}

\begin{table}[thbp]
\centering
\begin{tabular}{|c|c|r|c|r|c|r|}
\hline $|\triangle|$ & $u-S_u$ & rate & $\nabla(u-S_u)$ & rate &
$\nabla^2(u- S_u)$ & rate \cr\hline 
0.7071 & 2.760695e-07 & 0.00 & 3.427271e-06 & 0.00 & 5.952484e-05 & 0.00  \cr \hline 
0.3536 & 4.721134e-09 & 5.87 & 1.113495e-07 & 4.94 & 3.938359e-06 & 3.92  \cr \hline 
0.1768 & 7.777767e-11 & 5.92 & 3.351050e-09 & 5.05 & 2.373035e-07 & 4.05  \cr \hline 
0.0884 & 2.394043e-12 & 5.02 & 1.026261e-10 & 5.03 & 1.447321e-08 & 4.04  \cr \hline 
\end{tabular}
\caption{The RMSE of spline solutions using 
$X_h=S^{-1}_5(\triangle_\ell)$ and 
$M_h=S^{-1}_5(\triangle_\ell)$ for $
\ell=1, 2, 3, 4, 5$ of PDE (\ref{PDEex1})\label{T0d55}}
\end{table}

Note that in the last row of  Table~\ref{T0d55}, 
the rate of convergence in $L_2$ norm is 5.02 which is  lower than 
 5.92. This is because the iterative solution of the linear system 
achieves the machine precision for this test function using MATLAB.  
Indeed, if we use $u=\sin(2\pi x)\sin(2\pi y)$ which is slightly 
harder to approximate than $u=\sin(x)\sin(y)$ , the 
rate of convergence will be around 6. See the rates of convergence in  
the RMSE of the spline solution shown in Table~\ref{T0d55a}, 
where the rate is 5.74.   

\begin{table}[thbp]
\centering
\begin{tabular}{|c|c|r|c|r|c|r|}
\hline $|\triangle|$ & $u-S_u$ & rate & $\nabla(u-S_u)$ & rate &
$\nabla^2(u- S_u)$ & rate \cr\hline 
0.7071 & 2.390050e-02 & 0.00 & 2.699640e-01 & 0.00 & 4.628174e+00 & 0.00  \cr \hline 
0.3536 & 4.997698e-04 & 5.58 & 1.076435e-02 & 4.66 & 3.787099e-01 & 3.63  \cr \hline 
0.1768 & 8.812568e-06 & 5.83 & 3.225226e-04 & 5.06 & 2.356171e-02 & 3.99  \cr \hline 
0.0884 & 1.648941e-07 & 5.74 & 8.620885e-06 & 5.22 & 1.260638e-03 & 4.20  \cr \hline 
\end{tabular}
\caption{The RMSE of spline solutions using $X_h=M_h=S^{-1}_5(\triangle_\ell)$  
for $\ell=1, 2, 3, 4$ of PDE (\ref{PDEex1}) with 
$u=\sin(2\pi x)\sin(2\pi y)$. \label{T0d55a}}
\end{table}

\begin{table}[thbp]
\centering
\begin{tabular}{|c|c|r|c|r|c|r|}
\hline $|\triangle|$ & $u-S_u$ & rate & $\nabla(u-S_u)$ & rate &
$\nabla^2(u- S_u)$ & rate \cr\hline 
0.7071 & 1.862502e-03 & 0.00 & 2.436280e-02 & 0.00 & 4.404080e-01 & 0.00  \cr \hline 
0.3536 & 5.460275e-05 & 5.09 & 1.350238e-03 & 4.18 & 5.202210e-02 & 3.08  \cr \hline 
0.1768 & 5.354973e-07 & 6.67 & 2.432368e-05 & 5.79 & 1.842914e-03 & 4.80  \cr \hline 
0.0884 & 3.836807e-09 & 7.12 & 4.105804e-07 & 5.89 & 6.417771e-05 & 4.84  \cr \hline 
\end{tabular}
\caption{The RMSE of spline solutions using $X_h=M_h=S^{-1}_6(\triangle_\ell)$ 
for $\ell=1, 2, 3, 4$ of PDE (\ref{PDEex1}) with 
$u=\sin(2\pi x)\sin(2\pi y)$. \label{T0d66a}}
\end{table}

\begin{table}[thbp]
\centering
\begin{tabular}{|c|c|r|c|r|c|r|}
\hline $|\triangle|$ & $u-S_u$ & rate & $\nabla(u-S_u)$ & rate &
$\nabla^2(u- S_u)$ & rate \cr\hline 
0.7071 & 1.167121e-03 & 0.00 & 2.022185e-02 & 0.00 & 5.174476e-01 & 0.00  \cr \hline 
0.3536 & 4.520586e-06 & 8.01 & 1.575837e-04 & 7.01 & 8.136845e-03 & 6.00  \cr \hline 
0.1768 & 2.063180e-08 & 7.78 & 1.352130e-06 & 6.87 & 1.347347e-04 & 5.92  \cr \hline 
0.0884 & 9.814292e-11 & 7.72 & 1.032652e-08 & 7.03 & 1.947362e-06 & 6.10  \cr \hline 
\end{tabular}
\caption{The RMSE of spline solutions using $X_h=M_h=S^{-1}_7(\triangle_\ell)$ 
for $\ell=1, 2, 3, 4$ of PDE (\ref{PDEex1}) with 
$u=\sin(2\pi x)\sin(2\pi y)$. \label{T0d77a}}
\end{table}

We have tested other solutions (e.g. $u=1/(1+x^2+y^2), 
u=\sin(\pi x)\sin(\pi y), u =\sin(\pi (x^2+y^2))$ 
and etc.. Numerical results are similar to 
Tables~\ref{T0d55a}--Tables~\ref{T0d77a}.  
We can see that the rate of convergence in 
$L_2$ norm is optimal for $d\ge 5$ and for sufficiently smooth 
solutions.  That is,  the optimal convergence rate is reached  
when using splines in $S^1_d(\triangle)$ with $d\ge 5$. 

Finally, our algorithm is efficient in the following sense: each table 
above (Tables~\ref{T0d55}-- \ref{T0d77a} is generated within 
$180$ seconds based on a desktop computer of 16GB in RAM with Intel 
Processor i7-3770CPU @3.4GHz speed. 
For Tables~\ref{T0d2}--\ref{T0d44}, it takes 550 seconds to generate. 
Major time is spent on the evaluation of $1001\times 1001$ 
spline values. 
\end{example}

\subsection{The case with nonsmooth coefficients and nonsmooth solution}
\label{ss:nonsmooth}
The numerical results in the previous subsection show that our program 
works well to find numerical solution of the PDE with smooth 
coefficients. 
In this subsection, we shall demonstrate that our method works
well for those PDE with nonsmooth coefficients which can not be 
converted into its divergence form.
Higher order of splines produce more accurate solutions 
in $L_2$ norm and $H^1$ semi-norm. Even the solution is only 
$C^1(\Omega)$, we are  able to approximate
the solution in $H^2(\Omega)$ semi-norm very well as the same as
in \cite{SS13}.

\begin{example}
\label{ex3} In this example, we show the performance of our spline
solutions for a PDE with  nondifferentiable coefficients and
nonsmooth exact solution
$u=xy(e^{1-|x|}-1)(e^{1-|y|}-1)$ which satisfies
\begin{equation}
\label{PDEex3} 2\frac{\partial^2}{\partial x^2} u +
2\sign(x)\sign(y)\frac{\partial^2}{\partial x\partial y} u
+2\frac{\partial^2}{\partial y^2} u = f(x,y), \quad (x,y)\in
\Omega\subset \mathbb{R}^2
\end{equation}
where $u=0$ on the boundary of  $\Omega=[-1, 1]\times [-1, 1]$ as in
\cite{SS13}. Note that the solution is in $H^2(\Omega)$, but not
continuously twice differentiable. We shall use $X_h= S^{-1}_d(\triangle_\ell)$ and 
$M_h=S^{-1}_{d-2}(\triangle_\ell)$ with $\triangle_\ell$ 
shown in Figure~\ref{tris}.

\begin{figure}[htbp]
\begin{center}
\includegraphics[width=7cm, height=6cm, angle=0]{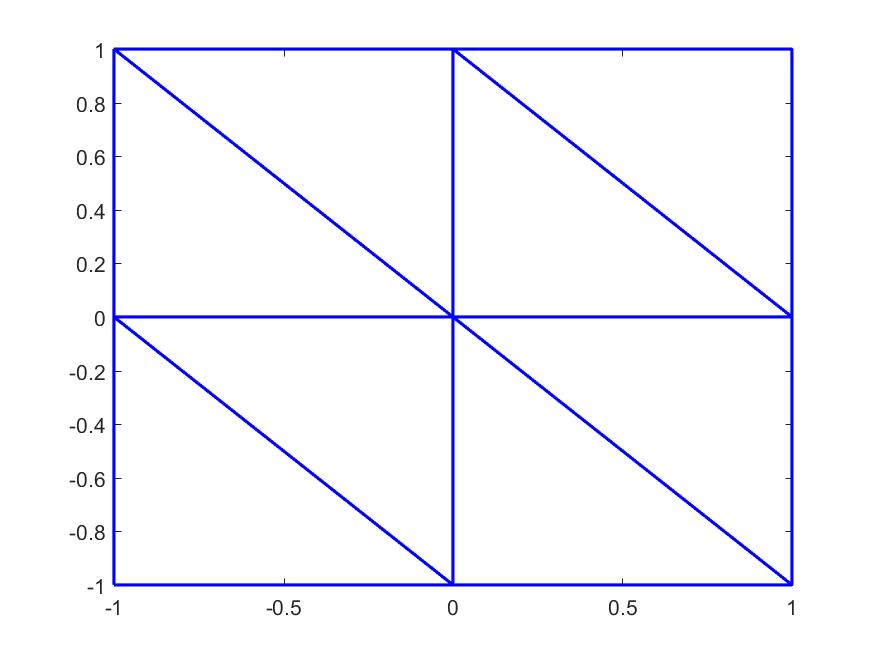}
\includegraphics[width=7cm, height=6cm, angle=0]{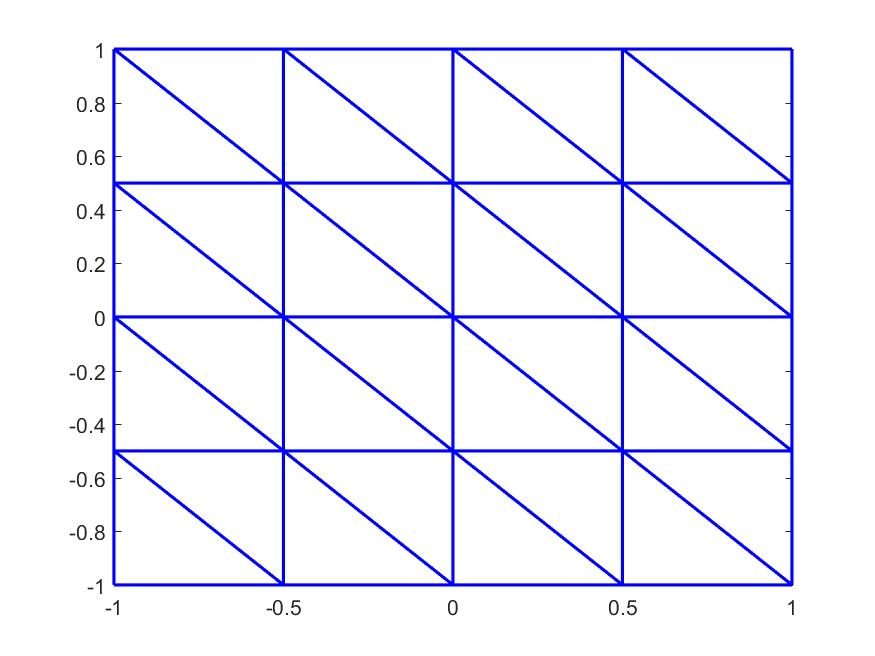}
\includegraphics[width=7cm, height=6cm, angle=0]{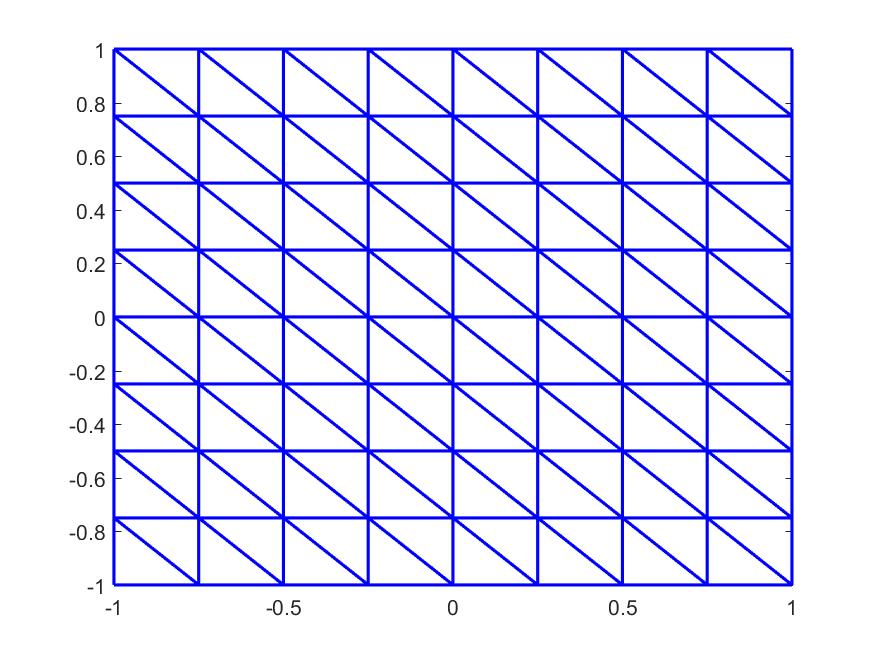}
\includegraphics[width=7cm, height=6cm, angle=0]{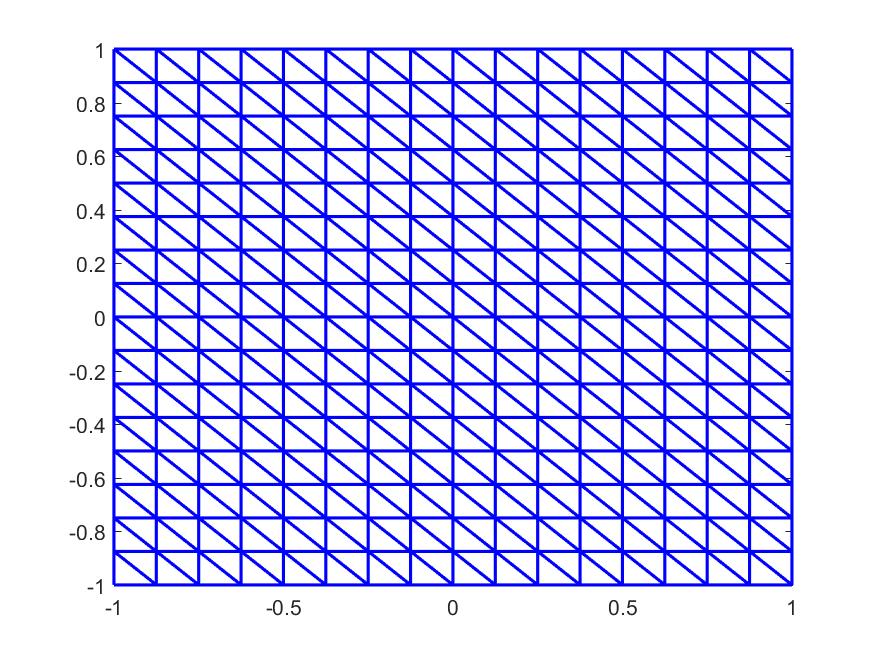}
\caption{Triangulations $\triangle_\ell, \ell=0, 1, 2, 3$\label{tris}}
\end{center}
\end{figure}

We can see that the convergences of our spline method in 
Tables~\ref{ndexd2}
are better than those in Table 8.5 in \cite{WW15} and the gradient 
approximations are better than the corresponding gradients in Table 8.6 
in \cite{WW15}. However, our $L_2$ estimates are not able to 
achieve the accuracy in Table 8.6 in \cite{WW15} using quadratic 
splines. More accurate solutions are obtained when splines of higher 
degrees are used. See Tables~\ref{ndexd3}--\ref{ndexd6}. 
This example was initially studied in \cite{SS13}.
See Fig. 2 in \cite{SS13}. For $d=2$, we are able to achieve the 
accuracy of $6.841801e-02$ at the size $h=0.0313$ for the accuracy 
of $\nabla^2(u-S_u).$ 
 
Instead of showing the convergence rates of 
$|u- u_h|_{H^2(\Omega)}$ in a semi-log graph for various $d=2, 3, 4, 5$ 
in \cite{SS13}, we present more detailed numerical convergence 
in root mean squared error (RMSE) for $u- u_h, \nabla (u- u_h), $ 
as well as $|D_x^2(u- u_h)|+|D_xD_y(u-u_h)|+|D^2_y(u-u_h)$ 
based on $333\times 333$
equally-spaced points over $\Omega=[-1,1]\times [-1, 1]$.

\begin{table}[thbp]
\centering
\begin{tabular}{|c|c|r|c|r|c|r|}
\hline $|\triangle|$ & $u-S_u$ & rate & $\nabla(u-S_u)$ & rate &
$\nabla^2(u- S_u)$ & rate \cr\hline
0.7071 & 4.996969e-02 & 0.00 & 1.180311e-01 & 0.00 & 4.068785e-01 & 0.00  \cr \hline 
0.3536 & 4.359043e-02 & 0.20 & 1.014477e-01 & 0.22 & 3.228522e-01 & 0.33  \cr \hline 
0.1768 & 2.742714e-02 & 0.67 & 6.679062e-02 & 0.60 & 2.187010e-01 & 0.56  \cr \hline 
0.0884 & 1.192668e-02 & 1.20 & 3.080064e-02 & 1.12 & 1.122648e-01 & 0.96  \cr \hline 
\end{tabular}
\caption{The RMSE of spline solutions using the pair
$X_h=S^{-1}_2(\triangle_\ell), M_h=S^{-1}_0(\triangle_\ell)$ of spline spaces for $
\ell=0, 1, 2, 3, 4$ of PDE (\ref{PDEex3}) based on uniform
triangulations in Figure~\ref{tris}\label{ndexd2}}
\end{table}

\begin{table}[thbp]
\centering
\begin{tabular}{|c|c|r|c|r|c|r|}
\hline $|\triangle|$ & $u-S_u$ & rate & $\nabla(u-S_u)$ & rate &
$\nabla^2(u- S_u)$ & rate \cr\hline
0.7071 & 2.251481e-02 & 0.00 & 5.717663e-02 & 0.00 & 2.307563e-01 & 0.00  \cr \hline 
0.3536 & 3.531272e-03 & 2.67 & 1.098214e-02 & 2.38 & 5.867175e-02 & 1.98  \cr \hline 
0.1768 & 4.842512e-04 & 2.87 & 1.699877e-03 & 2.69 & 1.261068e-02 & 2.22  \cr \hline 
0.0884 & 8.728429e-05 & 2.47 & 3.148312e-04 & 2.43 & 2.826181e-03 & 2.16  \cr \hline 
\end{tabular}
\caption{The RMSE of spline solutions using the pair
$X_h=S^{-1}_3(\triangle_\ell), M_h=S^{-1}_1(\triangle_\ell)$ of spline spaces for $
\ell=0, 1, 2, 3, 4$ of PDE (\ref{PDEex3}) based on uniform
triangulations in Figure~\ref{tris}\label{ndexd3}}
\end{table}

\begin{table}[thbp]
\centering
\begin{tabular}{|c|c|r|c|r|c|r|}
\hline $|\triangle|$ & $u-S_u$ & rate & $\nabla(u-S_u)$ & rate &
$\nabla^2(u- S_u)$ & rate \cr\hline
0.7071 & 1.297545e-03 & 0.00 & 3.501439e-03 & 0.00 & 2.743879e-02 & 0.00  \cr \hline 
0.3536 & 7.214648e-05 & 4.17 & 2.100133e-04 & 4.06 & 3.458414e-03 & 2.99  \cr \hline 
0.1768 & 4.215572e-06 & 4.10 & 1.213607e-05 & 4.11 & 3.941213e-04 & 3.13  \cr \hline 
0.0884 & 2.399293e-07 & 4.14 & 7.071917e-07 & 4.10 & 4.553490e-05 & 3.11  \cr \hline 
\end{tabular}
\caption{The RMSE of spline solutions using the pair
$X_h=S^{-1}_4(\triangle_\ell), M_h=S^{-1}_2(\triangle_\ell)$ of spline spaces for $
\ell=0, 1, 2, 3, 4$ of PDE (\ref{PDEex3}) based on uniform
triangulations in Figure~\ref{tris}\label{ndexd4}}
\end{table}

\begin{table}[htpb]
\centering
\begin{tabular}{|c|c|r|c|r|c|r|}
\hline $|\triangle|$ & $u-S_u$ & rate & $\nabla(u-S_u)$ & rate &$\nabla^2(u- S_u)$ & rate \cr\hline
0.7071 & 4.204985e-05 & 0.00 & 2.212172e-04 & 0.00 & 2.973071e-03 & 0.00  \cr \hline 
0.3536 & 2.322110e-06 & 4.18 & 8.539613e-06 & 4.70 & 1.945146e-04 & 3.92  \cr \hline 
0.1768 & 1.418182e-07 & 4.03 & 3.877490e-07 & 4.46 & 1.172927e-05 & 4.04  \cr \hline 
0.0884 & 8.751103e-09 & 4.02 & 2.151534e-08 & 4.17 & 7.033378e-07 & 4.06  \cr \hline 
\end{tabular}
\caption{The RMSE of spline solutions using the pair
$X_h=S^{-1}_5(\triangle_\ell), M_h=S^{-1}_3(\triangle_\ell)$ of spline spaces for $
\ell=0, 1, 2, 3, 4$ of PDE (\ref{PDEex3}) based on uniform
triangulations in Figure~\ref{tris}\label{ndexd5}}
\end{table}

\begin{table}[thbp]
\centering
\begin{tabular}{|c|c|r|c|r|c|r|}
\hline $|\triangle|$ & $u-S_u$ & rate & $\nabla(u-S_u)$ & rate &
$\nabla^2(u- S_u)$ & rate \cr\hline
0.7071 & 3.172567e-06 & 0.00 & 1.554581e-05 & 0.00 & 2.401833e-04 & 0.00  \cr \hline 
0.3536 & 4.869340e-08 & 6.03 & 2.888992e-07 & 5.75 & 8.574254e-06 & 4.79  \cr \hline 
0.1768 & 7.439441e-10 & 6.03 & 4.886180e-09 & 5.89 & 2.752530e-07 & 4.95  \cr \hline 
0.0884 & 7.196367e-12 & 6.69 & 7.790660e-11 & 5.97 & 8.597503e-09 & 5.00  \cr \hline  
\end{tabular}
\caption{The RMSE of spline solutions using the pair
$X_h=S^{-1}_6(\triangle_\ell), M_h=S^{-1}_4(\triangle_\ell)$ of spline spaces for $
\ell=0, 1, 2, 3, 4$ of PDE (\ref{PDEex3}) based on uniform
triangulations in Figure~\ref{tris}\label{ndexd6}}
\end{table}

\begin{table}[thbp]
\centering
\begin{tabular}{|c|c|r|c|r|c|r|}
\hline $|\triangle|$ & $u-S_u$ & rate & $\nabla(u-S_u)$ & rate &
$\nabla^2(u- S_u)$ & rate \cr\hline
0.7071 & 2.593663e-07 & 0.00 & 7.442269e-07 & 0.00 & 1.217246e-05 & 0.00  \cr \hline 
0.3536 & 4.280178e-09 & 5.92 & 8.943363e-09 & 6.38 & 2.025671e-07 & 5.90  \cr \hline 
0.1768 & 7.233371e-11 & 5.89 & 1.274428e-10 & 6.13 & 3.166771e-09 & 5.99  \cr \hline  
\end{tabular}
\caption{The RMSE of spline solutions using the pair
$X_h=S^{-1}_7(\triangle_\ell), M_h=S^{-1}_5(\triangle_\ell)$ of spline 
spaces for $\ell=1, 2, 3$ of PDE (\ref{PDEex3}) based on uniform
triangulations in Figure~\ref{tris}\label{ndexd7}}
\end{table}

\begin{table}[thbp]
\centering
\begin{tabular}{|c|c|r|c|r|c|r|}
\hline $|\triangle|$ & $u-S_u$ & rate & $\nabla(u-S_u)$ & rate &
$\nabla^2(u- S_u)$ & rate \cr\hline
0.7071 & 8.731770e-09 & 0.00 & 3.136707e-08 & 0.00 & 6.080104e-07 & 0.00  \cr \hline 
0.3536 & 5.745609e-11 & 7.25 & 1.807841e-10 & 7.44 & 5.399368e-09 & 6.81  \cr \hline 
0.1768 & 1.079912e-11 & 2.41 & 2.701168e-11 & 2.74 & 8.501408e-10 & 2.69  \cr \hline  
\end{tabular}
\caption{The RMSE of spline solutions using the pair
$X_h=S^{-1}_8(\triangle_\ell), M_h=S^{-1}_6(\triangle_\ell)$ of spline 
spaces for $\ell=1, 2, 3$ of PDE (\ref{PDEex3}) based on uniform
triangulations in Figure~\ref{tris}\label{ndexd8}}
\end{table}

In Table~\ref{ndexd8}, the expected accurate solutions for degree 
$8$ at the triangulation with size $0.1768$ were not able to achieve
due to the limitation of our iterative algorithm CImin.m. 
Numerical convergence in Tables~\ref{ndexd2}
--\ref{ndexd8} provide more evidence that the convergence 
$|u-u_h|_{H^2(\Omega)}$ is of order $d-1$ for $d=2, 3, \cdots, 8$. 
In addition, the convergence rate for $\|u-u_h\|_{L^2(\Omega)}$ can be
better than $d-1$ for various $d$.  
\end{example}

\subsection{Numerical Results of PDE in (\ref{model}) with nonzero $c$}
In this subsection, we present some numerical results from our bivariate 
spline method for numerical solution of the PDE in (\ref{model}) with
nonzero $c$. We use three examples to demonstrate that our method is
effective and efficient no matter the PDE coefficients are smooth 
or not smooth and the solutions are smooth or not so smooth. 

\begin{example}
\label{ex51} We begin with a 2nd order elliptic equation with
smooth coefficients and smooth solution $u =\sin(\pi x)\sin(\pi y)$ 
which satisfies the following partial differential equation:
\begin{equation}
\label{PDEex51} 3\frac{\partial^2}{\partial x^2} u +
2\frac{\partial^2}{\partial x\partial y} u
+2\frac{\partial^2}{\partial y^2} u - 
(1+x^2+y^2)u = f(x,y), \quad (x,y)\in
\Omega\subset \mathbb{R}^2,
\end{equation}
where $\Omega$ is a standard  square domain  $[-1,1]^2$ which is split
into 4 equal sub-squares and each sub-square is split into 2 triangles
to form an initial triangulation $\triangle_0$. Let $\triangle_\ell$ 
be the $\ell$th uniform refinement of $\triangle_0$. 
 
\begin{table}[thbp]
\centering
\begin{tabular}{|c|c|r|c|r|c|r|}
\hline $|\triangle|$ & $u-S_u$ & rate & $\nabla(u-S_u)$ & rate &
$\nabla^2(u- S_u)$ & rate \cr\hline 
0.7071 & 7.148997e-04 & 0.00 & 5.688698e-03 & 0.00 & 7.513832e-02 & 0.00  \cr \hline 
0.3536 & 2.651667e-05 & 4.75 & 1.861396e-04 & 4.93 & 4.596716e-03 & 3.99  \cr \hline 
0.1768 & 1.257317e-06 & 4.40 & 6.093065e-06 & 4.93 & 2.814578e-04 & 4.03  \cr \hline 
0.0884 & 7.088746e-08 & 4.15 & 2.550376e-07 & 4.58 & 1.753997e-05 & 4.00  \cr \hline 
\end{tabular}
\caption{The RMSE of spline solutions using the pair 
$X_h=S^{-1}_5(\triangle_\ell), M_h=S^{-1}_5(\triangle_\ell)$ of spline 
spaces for $\ell=1, 2, 3, 4$ of PDE (\ref{PDEex51})\label{Nex51d5}}
\end{table} 
\end{example}

\begin{example}
\label{ex52} In this example, we use our spline method to solve the
following PDE with non-differentiable coefficients, but smooth solution.
\begin{equation}
\label{PDEex52} a(x,y)\frac{\partial^2}{\partial x^2} u +
b(x,y)\frac{\partial^2}{\partial x\partial y} u
+c(x,y)\frac{\partial^2}{\partial y^2} u -(1+x^2+y^2)u 
= f(x,y), \quad (x,y)\in
\Omega\subset \mathbb{R}^2
\end{equation}
where $a(x,y)=1+|x|, b(x,y)=(xy)^{1/3}, c(x,y)=1+|y|$ and $\Omega$
is a standard  domain  $[-1, 1]^2$. We use 
$u=\sin(\pi x)\sin(\pi y)$ as the exact solution. The same 
triangulations $\triangle_\ell$ as in Example~\ref{ex51} will be 
used.

\begin{table}[thbp]
\centering
\begin{tabular}{|c|c|r|c|r|c|r|}
\hline $|\triangle|$ & $u-S_u$ & rate & $\nabla(u-S_u)$ & rate &
$\nabla^2(u- S_u)$ & rate \cr\hline 
0.7071 & 5.906274e-04 & 0.00 & 5.894580e-03 & 0.00 & 9.080845e-02 & 0.00  \cr \hline 
0.3536 & 1.155544e-05 & 5.68 & 1.785330e-04 & 5.05 & 5.673534e-03 & 3.97  \cr \hline 
0.1768 & 3.265019e-07 & 5.15 & 4.834318e-06 & 5.21 & 3.150799e-04 & 4.15  \cr \hline 
0.0884 & 1.568269e-08 & 4.38 & 1.463029e-07 & 5.05 & 1.866297e-05 & 4.07  \cr \hline 
\end{tabular}
\caption{The RMSE of spline solutions using the pair 
$X_h=S^{-1}_5(\triangle_\ell), M_h=S^{-1}_5(\triangle_\ell)$ of spline 
spaces for $\ell=1, 2, 3, 4$ of PDE (\ref{PDEex52})\label{Nex52d5}}
\end{table}
\end{example}

\begin{example}
\label{ex53} In this example, we show the performance of our spline
solutions for a PDE with  nondifferentiable coefficients and
nonsmooth exact solution 
 $u=xy(e^{1-|x|}-1)(e^{1-|y|}-1)$ which satisfies
\begin{equation}
\label{PDEex53} 2\frac{\partial^2}{\partial x^2} u +
2\sign(x)\sign(y)\frac{\partial^2}{\partial x\partial y} u
+2\frac{\partial^2}{\partial y^2} u-(1+x^2+y^2)u = f(x,y), \quad (x,y)\in
\Omega\subset \mathbb{R}^2
\end{equation}
where $u=0$ on the boundary of  $\Omega=[-1, 1]\times [-1, 1]$ as in
\cite{SS13}. 
Note that the solution is in $H^2(\Omega)$, but not
continuously twice differentiable.  The same 
triangulations $\triangle_\ell$ as in Example~\ref{ex51} will be 
used and $S^1_5(\triangle_\ell)$ will be used 
to solve the PDE in (\ref{PDEex53}). The RMSE for spline approximation
to the exact solution is shown in Table~\ref{Nex53d5}.

\begin{table}[thbp]
\centering
\begin{tabular}{|c|c|r|c|r|c|r|}
\hline $|\triangle|$ & $u-S_u$ & rate & $\nabla(u-S_u)$ & rate &
$\nabla^2(u- S_u)$ & rate \cr\hline 
0.7071 & 2.914706e-02 & 0.00 & 2.813852e-01 & 0.00 & 4.122223e+00 & 0.00  \cr \hline 
0.3536 & 8.047145e-04 & 5.18 & 1.287751e-02 & 4.46 & 3.279903e-01 & 3.63  \cr \hline 
0.1768 & 2.963898e-05 & 4.76 & 3.942771e-04 & 5.02 & 1.893540e-02 & 4.06  \cr \hline 
0.0884 & 1.403774e-06 & 4.40 & 1.307268e-05 & 4.91 & 1.139668e-03 & 4.05  \cr \hline 
\end{tabular}
\caption{The RMSE of spline solutions using the pair 
$X_h=S^{-1}_5(\triangle_\ell), 
M_h=S^{-1}_5(\triangle_\ell)$ of spline spaces for $
\ell=1, 2, 3, 4$ of PDE (\ref{PDEex53})\label{Nex53d5}}
\end{table}

\begin{table}[thbp]
\centering
\begin{tabular}{|c|c|r|c|r|c|r|}
\hline $|\triangle|$ & $u-S_u$ & rate & $\nabla(u-S_u)$ & rate &
$\nabla^2(u- S_u)$ & rate \cr\hline 
0.7071 & 3.494139e-06 & 0.00 & 1.538901e-05 & 0.00 & 2.054258e-04 & 
0.00  \cr \hline 
0.3536 & 7.686885e-08 & 5.51 & 3.531160e-07 & 5.45 & 7.914461e-06 & 
4.70  \cr \hline 
0.1768 & 1.370000e-09 & 5.81 & 6.565436e-09 & 5.75 & 2.731226e-07 & 
4.86  \cr \hline 
0.0884 & 1.598556e-11 & 6.42 & 7.840289e-11 & 6.39 & 8.199009e-09 & 
5.06  \cr \hline 
\end{tabular}
\caption{The RMSE of spline solutions using the pair 
$X_h=S^{-1}_6(\triangle_\ell), 
M_h=S^{-1}_6(\triangle_\ell)$ of spline spaces for $
\ell=1, 2, 3, 4$ of PDE (\ref{PDEex53})\label{Nex53d6}}
\end{table}

\begin{table}[thbp]
\centering
\begin{tabular}{|c|c|r|c|r|c|r|}
\hline $|\triangle|$ & $u-S_u$ & rate & $\nabla(u-S_u)$ & rate &
$\nabla^2(u- S_u)$ & rate \cr\hline 
0.7071 & 6.099647e-08 & 0.00 & 4.914744e-07 & 0.00 & 1.004845e-05 & 
0.00  \cr \hline 
0.3536 & 9.745659e-10 & 5.97 & 4.297639e-09 & 6.84 & 1.659379e-07 & 
5.92  \cr \hline 
0.1768 & 9.091448e-12 & 6.74 & 5.183151e-11 & 6.37 & 3.038997e-09 & 
5.78  \cr \hline 
\end{tabular}
\caption{The RMSE of spline solutions using the pair 
$X_h=S^{-1}_7(\triangle_\ell), 
M_h=S^{-1}_7(\triangle_\ell)$ of spline spaces for $
\ell=1, 2, 3, 4$ of PDE (\ref{PDEex53})\label{Nex53d7}}
\end{table}

In addition, we have also experimented the convergence of our bivariate 
spline method over nonconvex domains. The bivariate spline method 
works very well. Due to the space limit, we omit these 
numerical results. 
\end{example}



\begin{thebibliography}{99}
\bibitem{A08}
G. Awanou, Robustness of a spline element method with constraints.
J. Sci. Comput., 36(3):421--432, 2008. 

\bibitem{A15}
G. Awanou, Spline element method for Monge-Ampère equations. BIT 55 (2015), no. 3, 625–-646.

\bibitem {ALW06}
G. Awanou, M. -J. Lai, and P. Wenston. The multivariate spline method
for scattered data fitting and numerical solution of partial
differential equations. In Wavelets and splines: Athens
2005,  pages 24--74. Nashboro Press, Brentwood, TN, 2006.



\bibitem{B73}
I. Babuska, The finite element method with Lagrange multipliers, Numer.
Math., 20 (1973), pp. 179--192.

\bibitem{BS94}
S. C. Brenner and L. R. Scott, The mathematical theory of finite
element methods, Springer Verlag, New York, 1994.

\bibitem{B74}
F. Brezzi, On the existence, uniqueness, and approximation of saddle
point problems arising from Lagrange multipliers, RAIRO,
8 (1974), pp. 129--151.

\bibitem{C78}
P. G. Ciarlet,  The Finite Element Method for Elliptic Problems, North--Holland, 1978.

\bibitem{D88}
M. Dauge,  Elliptic boundary value problems on corner domains, Lecture
Notes in Math., 1341, Springer Verlag, Berlin, 1988.

\bibitem{E98}
L. Evens, Partial Differential Equations,  American Math. Society, Providence, 1998.

\bibitem{F86}
G. Farin, Triangular bernstein-b{\'e}zier patches.
{\em Computer Aided Geometric Design}, 3(2):83--127, 1986.

\bibitem{G85}
P. Grisvard, Elliptic Problems in Nonsmooth Domains, Pitman, Boston,
1985.

\bibitem{GLS15}
J. Gutierrez, M. -J. Lai, and Slavov, G., Bivariate Spline Solution of
Time Dependent Nonlinear
PDE for a Population Density over Irregular Domains, Mathematical
Biosciences, vol. 270 (2015) pp. 263--277.

\bibitem{HHL07}
X. Hu,  Han, D. and Lai, M. -J., Bivariate Splines of Various Degrees
for Numerical Solution of PDE, SIAM Journal of Scientific Computing,
vol. 29 (2007) pp. 1338--1354.

\bibitem{J98}
B.-N. Jiang, The Least-Squares Finite Element Method, Springer, 1998.

\bibitem{K28}
O.  D.  Kellogg,  On bounded  polynomials in several  variables,
Math. Z. 27(1928), 55--64.

\bibitem{LS07}
M. -J. Lai and L. L. Schumaker, {\sl Spline Functions over
Triangulations}, Cambridge University Press, 2007.

\bibitem{LW04}
M. -J. Lai and Wenston, P., Bivariate Splines for Fluid Flows,
Computers and Fluids, vol. 33 (2004) pp. 1047--1073.

\bibitem{MPS00}
 A. Maugeri, D. K. Palagachev, and L. G. Softova, Elliptic and
parabolic equations with
discontinuous coefficients, vol. 109 of Mathematical Research,
Wiley-VCH Verlag, Berlin, 2000.

\bibitem{S15}
L. L. Schumaker, Spline Functions: Computational Methods, SIAM
Publication, 2015.

\bibitem{SS13}
 I. Smears and E.  S\"uli, Discontinuous Galerking finite element
approximation of nondivergence
form elliptic equations with Cord\'es coefficients, SIAM J Numer.
Anal., Vol. 51, No. 4, 2013, pp. 2088--2106.

\bibitem{S13}
E.  S\"uli, A brief excursion into the mathematical theory of mixed
finite element methods, Lecture Notes,
University of Oxford, 2013.

\bibitem{WW14}
C. Wang and J. Wang,  An efficient numerical scheme for the biharmonic
equation by weak Galerkin finite element methods on
polygonal or polyhedral meshes. Comput. Math. Appl. 68 (2014), no. 12,
part B, 2314--2330.

\bibitem{WW15}
C. Wang and J. Wang, A primal-dual weak Galerkin finite element method
for second order elliptic equations in non-divergence form, in revision, submitted to Math. Comp. 
arXiv:1510.03488v1.

\bibitem{wy3655}
J. Wang and X. Ye, {\em A weak Galerkin mixed finite element method
for second-order elliptic problems},
Math. Comp., 83 (2014), 2101--2126.

\bibitem{W74}
D. R. Wilhelmsen, A Markov inequality in several dimension, Journal Approximation Theory, 11(1974), 216--220.
\end{thebibliography}
\end{document}